\newcommand{\stkout}[1]{\ifmmode\text{\sout{\ensuremath{#1}}}\else\sout{#1}\fi}
\newtheorem{theorem}{Theorem}[section]
\newtheorem{remark}[theorem]{Remark}
\newtheorem{assumption}[theorem]{Assumption}
\newtheorem{lemma}[theorem]{Lemma}
\newtheorem{proposition}[theorem]{Proposition}
\newtheorem{corollary}[theorem]{Corollary}
\def\A{\mathcal{A}}
\def \E{\mathsf{E}}
\def \P{\mathsf{P}}
\def \R{\mathbb{R}}
\def \F{\mathbb{F}}
\def\d{\mathrm{d}}
\newcommand{\esssup}{\text{esssup}}
\newcommand{\essinf}{\text{essinf}}
\newcommand{\diver}{\text{div}}
\definecolor{red}{rgb}{1.0,0.0,0.0}
\definecolor{blu}{rgb}{0.0,0.0,1.0}
\definecolor{gre}{rgb}{0.03,0.50,0.03}
\title[Infinite-Dimensional Singular Stochastic Control]{On a Class of Infinite-Dimensional Singular Stochastic Control Problems}
\author[Federico]{Salvatore Federico}
\author[Ferrari]{Giorgio Ferrari}
\author[Riedel]{Frank Riedel}
\author[R\"ockner]{Michael R\"ockner}
\address{S.~Federico: Dipartimento di Economia Politica e Statistica, Universit\`a di Siena, Piazza san Francesco 7/8, 53100, Siena Italy}
\email{\href{mailto:salvatore.federico@unisi.it}{salvatore.federico@unisi.it}}
\address{G.~Ferrari: Center for Mathematical Economics (IMW), Bielefeld University, Universit\"atsstrasse 25, 33615, Bielefeld, Germany}
\email{\href{mailto:giorgio.ferrari@uni-bielefeld.de}{giorgio.ferrari@uni-bielefeld.de}}
\address{F.~Riedel: Center for Mathematical Economics (IMW), Bielefeld University, Universit\"atsstrasse 25, 33615, Bielefeld, Germany}
\email{\href{mailto:frank.riedel@uni-bielefeld.de}{frank.riedel@uni-bielefeld.de}}
\address{M.~R\"ockner: Faculty of Mathematics, Bielefeld University, Universit\"atsstrasse 25, 33615, Bielefeld, Germany}
\email{\href{mailto:roeckner@math.uni-bielefeld.de}{roeckner@math.uni-bielefeld.de}}
\date{\today}
\numberwithin{equation}{section}
\begin{document}

\begin{abstract} 
We study a class of infinite-dimensional singular stochastic control problems with applications in economic theory and finance. The control process linearly affects an abstract evolution equation on a suitable partially-ordered infinite-dimensional space $X$, it takes values in the positive cone of $X$, and it has right-continuous and nondecreasing paths. We first provide a rigorous formulation of the problem by properly defining the controlled dynamics and integrals with respect to the control process. We then exploit the concave structure of our problem and derive \emph{necessary and sufficient} first-order conditions for optimality. The latter are finally exploited in a specification of the model where we find an explicit expression of the optimal control. The techniques used are those of semigroup theory, vector-valued integration, convex analysis, and general theory of stochastic processes. 

\end{abstract}

\maketitle

\smallskip

{\textbf{Keywords}}: infinite-dimensional singular stochastic control; semigroup theory; vector-valued integration; first-order conditions; Bank-El Karoui's representation theorem; irreversible investment.  

\smallskip

{\textbf{MSC2010 subject classification}}: 93E20, 37L55, 49K27, 40J20, 91B72.

%%%%%%%%%%%%%%%%%%%%%%%%%%%%%%%%%%%%%%%%%

\section{Introduction}
\label{introduction}

In this paper we study a class of infinite-dimensional singular stochastic control problems, over the time-interval $[0,T]$, where $T \in (0,\infty]$. As we discuss below, these are motivated by relevant models in economic theory and finance.

Let $(\Omega,\mathcal{F},\mathbb{F}:=(\mathcal{F}_t)_{t\in[0,T]},\P)$ be a filtered probability space, let $(D,\mathcal{M},\mu)$ be a measure space, and let $X:=L^p(D,\mathcal{M},\mu)$, where $p \in (1,\infty)$. The state variable $(Y_t)_{t\in[0,T]}$ of our problem is a stochastic process evolving in the space $X$ according to a linear (random) evolution equation, that is linearly affected by the control process $\nu$:
\begin{equation}
\label{state:intro}
\d Y_t=\A Y_t\,\d t+ \d \nu_t.
\end{equation}
The stochastic process $(\nu_t)_{t\in [0,T]}$ is adapted with respect to the reference filtration $\F$, it has right-continuous and nondecreasing paths, and takes values in the positive cone of $X$. Among other more technical conditions, we assume that the operator $\A$ above generates a $C_0$-semigroup of positivity-preserving bounded linear operators $(e^{t\A})_{t\geq 0}$ in the space $X$. The performance criterion to be maximized takes the form of an expected net profit functional. The randomness comes into the problem through an exogenous $X^*$-valued process $(\Phi^*_t)_{t\in[0,T]}$ -- where $X^*$ is the topological dual of $X$ -- giving the marginal cost of control, and through a general random running profit/utility function $\Pi:\Omega\times [0,T]\times X\to\R_+$. That is, we consider a functional of the form
\begin{equation}
\label{fun:intro}
\mathcal{J}(\mathbf{y},\nu):=\E\bigg[\int_0^T \Pi\left(t, Y_t^{\mathbf{y},\nu}\right) q(\d t) -  \int_0^T \langle \Phi^*_t,\,\d \nu_t\rangle_{X^*\!,\,X} \bigg].
\end{equation}
Here $\mathbf{y}$ denotes the initial state of the system, $q$ is a suitable measure on $[0,T]$, and $\langle \cdot,\cdot\rangle_{X^*,X}$ denotes the dual pairing of the spaces $X^*,X$.
\vspace{0.15cm}

\textbf{Related Literature.}\ In finite-dimensional settings, singular stochastic control problems and their relation to questions of optimal stopping are nowadays a well-established brunch of optimal control theory, that found (and, actually, were motivated by) many applications in several contexts (see, e.g, Chapter VIII in \cite{FlemingSoner}). While the theory of regular stochastic control and of optimal stopping in infinite-dimensional (notably, Hilbert) spaces received a large attention in the last decades (see, e.g., the recent monography \cite{FGS} for control problems, and \cite{BM}, \cite{CDA}, \cite{FO}, \cite{GS} for optimal stopping), the literature on singular stochastic control in infinite-dimensional spaces is very limited. The only two papers brought to our attention are \cite{Oksendal1} and \cite{Oksendal2}, where the authors study problems motivated by optimal harvesting in which a stochastic partial differential equation (SPDE) is controlled through a singular control. In \cite{Oksendal1} the problem is posed for a quite general controlled SPDE, which also enjoys a space-mean dependence in \cite{Oksendal2}. The authors derive a necessary Maximum Principle, which is also sufficient under the assumption that the Hamiltonian function related to the considered control problem is concave. However, these yet valuable contributions seem to suffer from the foundational point of view, since when dealing with (singularly controlled) SPDEs one has to be cautious with existence of a solution and application of It\^o's formula (see \cite{LR} for theory and results on SPDEs). In particular, it turns out that in infinite-dimensional singular (stochastic) control problems already the precise meaning of the integral with respect to the vector measure represented by the control process - and therefore the precise meaning of the controlled state equation - is a delicate issue that deserves to be addressed carefully. 
\vspace{0.15cm}   
 
\textbf{Contribution and Results.}\ The contribution of this work is threefold.

First of all, our work aims at having a foundational value by providing a rigorous framework where to formulate singular (stochastic) control problems in infinite-dimensional spaces. In this respect, it is worth stressing that we have a different view on the controlled state equation with respect to \cite{Oksendal2}, \cite{Oksendal1}: whereas the latter works follow a variational approach, we follow a semigroup approach (see \cite{BDDM}, \cite{DPZ} for comparison in different contexts).
In particular, in order to make the controlled dynamics well defined as a \emph{mild solution} (see \eqref{mild}) to the singularly controlled (random) evolution equation on a suitable space $X$, we need to properly define time-integrals in which the semigroup generated by the operator $\A$ is integrated against the differential (in time) of the control process. Moreover, to perform our study, also integrals of $X^*$-valued stochastic processes with respect to the differential (in time) of the control process have to be introduced, and a related theorem of Fubini-Tonelli's type has to be proved. All those definitions and results are based on the identification of any control process with a (random) countably additive vector measure on the Borel $\sigma$-algebra of $[0,T]$, and on the so-called Dunford-Pettis' theorem (see Section \ref{sec:setting} below for more details). To the best of our knowledge, such a rigorous foundation of the framework appears in this work for the first time, and we believe that this contribution can pave the way to the study of other infinite-dimensional singular stochastic control problems. 

Second of all, by exploiting the linearity of the controlled state-variable with respect to the control process, and the concavity of the profit functional, we are able to derive \emph{necessary and sufficient} first-order conditions for optimality. These can be seen as a generalization, in our stochastic and infinite-dimensional setting, of the Kuhn-Tucker conditions of classical static optimization theory, and they are consistent with those already obtained for finite-dimensional singular stochastic control problems (see \cite{BankRiedel1}, \cite{Bank05}, and \cite{FerrariSalminen}, among others). It is worth noting that for this derivation, the operator $\A$, as well as the random profit function and the marginal cost of control, are quite general.

Clearly, further requirements are needed in order to provide an explicit solution to our problem. The third main contribution of this work is the determination of an explicit expression for the optimal control, in a setting that is more specific, but still general enough. In particular, we assume that $\A$ generates a $C_0$-group of operators, that the unitary vector $\mathbf{1}$ is an eigenvector of $\A$ and of its adjoint $\A^*$, and that the random profit and the random marginal cost of investment are proportional - through real-valued stochastic processes - to such a unitary vector. However, the initial distribution $\mathbf{y}$ of the controlled state variable is an arbitrary vector belonging to the positive cone of $X$, thus still providing an infinite-dimensional nature to the control problem. Under these specifications, we show that, if $\mathbf{y}$ is sufficiently small, then an optimal control is given in terms of the real-valued optional process $(\ell_t)_{t\in[0,T]}$ uniquely solving a one-dimensional backward equation \`a la Bank-El Karoui. The optimal control prescribes making an initial jump of space-dependent size $\mathbf{1}(x)\ell_0 - \mathbf{y}(x)$, $x \in D$. Then, at any positive time, the optimal control keeps the optimally controlled dynamics proportional to the unitary vector, and with a shape that is given by the running supremum of $\ell$. To the best of our knowledge, this is the first paper providing the explicit solution to an infinite-dimensional singular stochastic control problem. Indeed, in Section 3 of \cite{Oksendal2} and Section 2.1 of \cite{Oksendal1} only heuristic discussion on the form of the optimal control is presented.
\vspace{0.15cm}

\textbf{Economic Interpretation and Potential Models.}\ The class of infinite-dimensional singular stochastic control problems that we study in this paper has important potential applications in economics and finance, and we now provide an informal discussion on that.
\vspace{0.15cm}

\emph{Irreversible Investment.}\ Investment in skills, capacity, and technology is often irreversible (see \cite{DP}). Due to the considerable complexity of intertemporal profit maximization problems involving irreversible decisions, most of the literature is confined to single product decisions. Our setup allows to take the full heterogeneity of investment opportunities into account.

For example, think of a globally operating firm that can invest at various geographic locations, in various types of workers with location-specific skills and education levels, with varying natural environments for machines and buildings. Then, the different parameters of investment can be described, for instance, by a parameter $x \in D \subseteq \mathbb R^n$. The firm controls the cumulative investment $\nu_t(x)$ up to time $t$ at each location-skill-environment parameter $x$, resulting in an overall production capacity $Y_t(x)$. Due to demographic changes, changes in the natural environment, or spillover effects, the various capacities evolve locally in space according to an operator $\mathcal A$. The dynamics is therefore given by an evolution equation of type \eqref{state:intro}, with $D \subseteq \R^n$ and $\mu$ the Lebesgue measure. The firm faces stochastic marginal costs of investment, $\Phi^*$, and running profits depending on the current level of production capacity and, possibly, on other stochastic factors affecting the business conditions. The aim is to maximize expected net profits over a certain time horizon $[0,T]$, i.e.\ a functional of type \eqref{fun:intro}. A specific example of such an irreversible investment problem is solved explicitly in Section \ref{sec:explicit} below.
\vspace{0.15cm}

\emph{Monopolistic Competition.}\ The theory of monopolistic competition is a classic in economics that has been proposed in \cite{Ch} as alternative to the Walras--Arrow--Debreu paradigm of competitive markets. It is used frequently in international economics (see, e.g., \cite{KO}).

In monopolistic competition, a large group of firms produces differentiated commodities (``brands''). Each firm has a local monopoly for its own brand. However, there is competition in the sense that customers might well be able to substitute one brand for another, for example if the brands just differ in quality, but not in the essential economic use. Consumer's intertemporal welfare might be described by a constant elasticity of substitution utility functional of the form
$$\int_0^T \left(\int_D Y_t(x)^{1-\gamma} \mu(\d x) \right)^{\frac{1}{1-\gamma}}\, q(\d t), \quad \gamma \in (0,1),$$
with the measure $\mu$ describing the weight or importance of each brand for welfare, and the measure $q$ the time-preferences of the agent. Here, $Y_t(x)$ is aggregate consumption of brand $x$ at time $t$. Its evolution is driven by an operator $\A$ that might take into account any possible interaction across the different firms (spillover effects, technological shifts, etc.), and its level can be instantaneously increased by the agent through consumption. Hence, we might think that $Y$ evolves as in \eqref{state:intro}. 

We may also assume that the consumer faces a linear budget constraint for the \emph{ex ante} price of a consumption plan $\nu$, with stochastic time-varying marginal price of consumption $(\Phi^*_t)_{t \in [0,T]}$, i.e.
$$
\E\left[\int_0^T \int_D \Phi^*_t(x)\d \nu_t(x)\mu(\d x)\right]\leq w,
$$
for some initial wealth $w>0$.
Then, after deriving the Lagrangian functional associated to such an intertemporal optimal consumption problem, one easily realizes that efficient allocations can be found by solving a control problem of the form \eqref{fun:intro}. Our approach thus gives a rigorous foundation for studying monopolistic competition involving multiple commodities and irreversible consumption decisions.
\vspace{0.15cm}

\emph{Intertemporal Consumption with Substitution and Commodity Differentiation.}\ The lifecycle consumption choice model forms a basic building block for most macroeconomic and financial market models (cf., e.g., \cite{Cochrane}). So far, most intertemporal consumption models suppose a single consumption good and assume a time-additive expected utility specification in order to keep the mathematics simple and to allow for explicit solutions. One thus thinks of the consumption good as an aggregate commodity which reflects the overall consumption bundle. Consumption occurs, however, in many different goods and quality levels. Moreover, the time-additive structure of utility functions ignores important aspects of intertemporal substitution as Hindy, Huang, and Kreps have pointed out in \cite{HHK}. 
 
Our work provides a basis to study Hindy--Huang--Kreps utility functionals for differentiated commodities. Consider an agent who can choose at time $t$ consumption from a whole variety of goods $x \in D$, where $D \subseteq \mathbb R^n$. Let $Y_t(x)$ describe the level of satisfaction derived up to time $t$ of variety $x$. The natural evolution of satisfaction along the variety space might be described by a partial differential operator $\A$, which includes depreciation and other changes. The agent increases her level of satisfaction by consuming, and the cumulative consumption of variety $x$ is described by $\nu_t(x)$, which is an adapted stochastic process, nondecreasing in $t$. The overall level of satisfaction then evolves through a controlled evolution equation like ours \eqref{state:intro} above.
 
Within this setting, the natural extension of the Hindy--Huang--Kreps utility functional takes the form 
$$\E\bigg[\int_0^T \Big(\int_D u(t, Y^{\nu}_t(x)) \mu(\d x) \Big) \d t\bigg],$$
for some measure $\mu$ on $D$, and a (possibly random) instantaneous utility function $u$. Then, if the agent faces a linear budget constraint for the \emph{ex ante} price of a consumption plan $\nu$, with stochastic marginal price of consumption $(\Phi^*_t)_{t \in [0,T]}$ as in the example above, the Lagrangian formulation of the resulting optimal consumption problem leads to an optimal control problem like ours \eqref{fun:intro} (see \cite{BankRiedel1} for a related problem and approach in a finite-dimensional setting).
\vspace{0.15cm}

\textbf{Organization of the Paper.}\ The rest of the paper is organized as follows. In Section \ref{sec:setting-OC} we introduce the setting and formulate the infinite-dimensional singular stochastic control problem. In Section \ref{Solution} we characterize optimal controls via necessary and sufficient conditions. These are then employed in Section \ref{sec:group} to construct an explicit solution in the case when the operator $\A$ generates a $C_0$-group of operators (see in particular Section \ref{sec:explicit}). Applications to PDE models are then discussed in Section \ref{Sec:PDE}, while concluding remarks and future outlooks are presented in Section \ref{sec:conclusion}.

%%%%%%%%%%%%%%%%%%%%%%%%%%%%%%%%%%%%%%%%%%%%%%%%%%%%%%%%%%%%%%%%%%%%%%%%%%%%%%%%%%%%%%%%%%%%%%%%%%%%%%%%%%%%%%%%%%%%%%%%%%%%%%%%%%%%%%%%%%%%%%%%%%%%%%%%%%%%%%%%%%%%%%%%%%

\section{Setting and Problem Formulation}
\label{sec:setting-OC}

\subsection{Setting and Preliminaries}
\label{sec:setting}

Let $(D,\mathcal{M},\mu)$ be a measure space and consider the reflexive Banach space $X:=L^p(D,\mathcal{M},\mu)$, $p\in (1,\infty)$. We denote the norm of $X$ by $|\cdot|_X$. Let $p^*=\frac{p}{p-1}\in (1,\infty)$ be the conjugate exponent of $p$, so that  $X^*:=L^{p^*}(D,\mathcal{M},\mu)$ is the topological dual of $X$. The norm of $X^*$ will be denoted by $|\cdot|_{X^*}$ and the duality pairing between $v^*\in X^*$ and $v\in X$ by $\langle v^*, v \rangle$. The order relations, as well as the supremum or infimum of elements of $X$ and $X^*$, will be intended pointwise. The nonnegative cones of $X$ and $X^*$ are defined, respectively, as 
$$K_+:=\big\{v\in X: \ v \geq 0\big\}, \ \ \ \ K^*_+:=\big\{v^*\in X^*: \ v^* \geq 0\big\}.$$
Hereafter, we denote by $\mathcal{L}(X)$ the space of linear bounded operators $P:X\to X$ and by {$\mathcal{L}^+(X)$} the subspace of  positivity-preserving operators of $\mathcal{L}(X)$; i.e., ${P}\in \mathcal{L}^+(X)$ if
{$$
f\in X, \ f \geq 0  \ \Longrightarrow \  f\geq 0.$$
} 
Throughout the paper, we consider a linear operator $\mathcal{A}:\mathcal{D}(\mathcal{A})\subseteq X\to X$ satisfying the following
\begin{assumption}
\label{ass:A}
$\mathcal{A}$ is closed, densely defined, and generates a $C_0$-semigroup of linear operators $(e^{t\A})_{t\geq 0}\subseteq \mathcal{L}^+(X)$.
\end{assumption}

For examples see Remark \ref{rem:operators} and Section \ref{Sec:PDE} below. Recall that, by classical theory of $C_0$-semigroups (see \cite{EN}), also the adjoint operator $\mathcal{A}^*: \mathcal{D}(\A^*)\subseteq X^*\to X^*$ generates a $C_0$-semigroup on $X^*$; precisely, $e^{t\A^*}=(e^{t\A})^*$. It is easily seen that also $e^{t\mathcal{A}^*} \in \mathcal{L}^+(X^*).$
\medskip

Let $T \leq \infty$ be a fixed horizon\footnote{When $T=\infty$, we shall use the convention that the intervals $[s,T]$ and $(s,T]$, with $s\geq 0$, denote $[s,\infty)$ and $(s,\infty)$, respectively.} and endow the interval $[0,T]$ with the Borel $\sigma$-algebra $\mathcal{B}([0,T])$. 
Also, let $(\Omega,\mathcal{F},\F,\P)$ be a filtered probability space, with filtration $\F:=(\mathcal{F}_t)_{t\in[0,T]}$ satisfying the usual conditions. In the following, all the relationships involving $\omega\in\Omega$ as hidden random parameter are intended to hold $\P$-almost surely. Also, in order to simplify the exposition, often we will not stress the explicit dependence of the involved random variables and processes with respect to $\omega\in \Omega$. Let 
\begin{eqnarray}
\label{set:S}
\mathcal{S}&:=& \{\nu: \Omega \times [0,T] \to K_+:\,\,\F-\text{adapted and such that}\,\, t \mapsto \nu_t \nonumber \\
&& \hspace{1.5cm} \text{is \ nondecreasing and right-continuous}\}.
\end{eqnarray}
In the following, we set $\nu_{0^-}:=\textbf{0}\in K_+$ for any $\nu \in \mathcal{S}$ (see Remark \ref{rem:nu0} below).

Notice that any given $\nu \in \mathcal{S}$ can be seen as a (random) countably additive vector measure $\nu: \mathcal{B}([0,T]) \to K_+$ of finite variation
defined as  
$$\nu([s,t]):= \nu_t - \nu_{s^-} \  \ \ \forall s,t\in [0,T], \ s\leq t.$$ 
We denote by $|\nu|$ the variation of $\nu$, which is a nonnegative (optional random) measure on $([0,T],\mathcal{B}([0,T])$ and that, due to monotonicity of $\nu$, can be simply expressed as 
$$|\nu|([s,t]) = |\nu_t - \nu_{s^-}|_X, \ \ \ \forall s,t\in [0,T], \ s\leq t.$$

\begin{remark}
\label{rem:nu0}
By setting $\nu_{0^-}:=\textbf{0}\in K_+$ for any $\nu \in \mathcal{S}$, we mean that we extend any $\nu \in \mathcal{S}$ by setting $\nu \equiv \textbf{0}$ on $[-\varepsilon,0)$, for a given and fixed $\varepsilon>0$. In this way, the associated measures an have a positive mass at initial time of size $\nu_0$. Notice that this is equivalent with identifying any control $\nu$ with a countably additive vector measure $\nu: \mathcal{B}([0,T]) \to K_+$ of finite variation defined as $\nu((s,t]):= \nu_t - \nu_{s}$, for every $s,t\in [0,T]$, $s < t$, plus a Dirac-delta at time $0$ of amplitude $\nu_0$.
\end{remark}

Since $X$ is a reflexive Banach space, by \cite{DU}, Corollary 13 at p.\ 76 (see also Definition 3 at p.\ 61), there exists a Bochner measurable function  $\rho=\rho(\omega):[0,T] \mapsto K_+$, for a.e.\ $\omega\in\Omega$, such that 
\begin{equation}
\label{DP}
\int_{[0,T]} |\rho_t|_{X} \d|\nu|_t < \infty \quad \text{and} \quad \d\nu= \rho\, \d|\nu|.
\end{equation}
Notice that, seen as a stochastic process, $\rho=(\rho_t)_{t\in [0,T]}$ is $\mathbb{F}$-adapted, because so is $\nu$.
Then, for a given $K^*_+-$valued $\mathbb{F}$-adapted process ${f^*}:=(f^*_t)_{t\in[0,T]}$, in view of \eqref{DP}, for any $t\in [0,T]$ we define 
\begin{eqnarray}
\label{def-int1}
& \displaystyle \int_0^t \langle f^*_s,\,\d \nu_s\rangle :=\int_{[0,t]} \langle f^*_s, \rho_s\rangle \d |\nu|_s = \int_{[0,t]} \Big(\int_D f^*_s(x) \rho_s(x) \d x \Big) \d |\nu|_s \nonumber \\
&= \displaystyle \int_D \Big(\int_{[0,t]} f^*_s(x) \rho_s(x) \d |\nu|_s \Big) \mu(\d x),
\end{eqnarray}
where the last step is possible due to Fubini-Tonelli's theorem. As a byproduct of \eqref{def-int1}, of Fubini-Tonelli's theorem, and of Theorem 57 at p.\ 122 \cite{DM}, Chapter VI, we also have that for any $t\in [0,T]$ 
\begin{equation}
\label{DM-opt}
\E\bigg[\int_0^t \langle \psi^*_s,\,\d \nu_s\rangle\bigg] = \E\bigg[\int_0^t \langle \E\big[\psi^*_s\,|\,\mathcal{F}_s\big],\,\d \nu_s\rangle\bigg],
\end{equation}
for any measurable $K_+$-valued stochastic process $\psi^*=(\psi^*_t)_{t\in[0,T]}$.

Next, given a strongly continuous map $\Theta:[0,T]^2\to \mathcal{L}^+(X)$, $(u,r)\mapsto \Theta(u,r)$, i.e.\ such that $(u,r)\mapsto \Theta(u,r)\mathbf{y}$ is continuous for each $\mathbf{y}\in X$,  we define
\begin{equation}
\label{def-int2}
\int_{0}^t \Theta(t,s) \d \nu_s:= \int_{[0,t]} \Theta(t,s)\rho_s \d |\nu|_s, \ \ \ \ t\in[0,T].
\end{equation}
where the last $X$-valued integral is well defined pathwise in $\Omega$ in the Bochner sense. Indeed, on the one hand, strongly continuity of $\Theta$ and Bochner measurability of $s\mapsto \rho_s(\omega)$ yields Bochner measurability of  $s\mapsto \Theta(t,s)\rho_s(\omega)$. On the other hand, 
 by strong continuity, the set  $\{\Theta(t,s)x, \, s \in [0,t]\}$ is compact in $X$, hence bounded; so, by the uniform boundedness principle, we have 
$$\int_{[0,t]} |\Theta(t,s)\rho_s|_X \d |\nu|_s \leq c \int_{[0,T]} |\rho_s|_X \d |\nu|_s < \infty,$$ 
where $c:= c(t)= \sup_{s\in[0,t]}|\Theta(t,s)|_{\mathcal{L}(X)} < \infty$ and \eqref{DP} has been used. 
\smallskip

The following Tonelli's type result is needed in the next section. 
\begin{lemma}
\label{lemma:fubini}
Let $f^*:[0,T]\times \Omega \to K^*_+$ be a measurable process, let $\Theta:[0,T]^2\to \mathcal{L}^+(X)$ be strongly continuous, and let $q$ be a finite (nonnegative) measure on $([0,T],\mathcal{B}([0,T]))$. Then for any $\nu \in \mathcal{S}$ we have
$$
\int_0^T  \left\langle f^*_t, \int_0^t \Theta(t,s) \d \nu_s \right\rangle q(\d t)= \int_0^T \left\langle   \int_s^T \Theta^*(t,s) f^*_t\, q(\d t), \d \nu_s\right\rangle.
$$
\end{lemma}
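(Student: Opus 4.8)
The plan is to unfold both sides using the definitions \eqref{def-int1}--\eqref{def-int2} and to reduce the claimed identity to a single application of the scalar Tonelli theorem to a nonnegative, jointly measurable integrand on the triangle $\{(s,t):0\le s\le t\le T\}$, equipped with the product of the two finite measures $\d|\nu|_s$ and $q(\d t)$. First I would rewrite the left-hand side. By definition \eqref{def-int2} the inner $X$-valued object is the Bochner integral $\int_0^t \Theta(t,s)\,\d\nu_s=\int_{[0,t]}\Theta(t,s)\rho_s\,\d|\nu|_s$, and since $f^*_t\in X^*$ is a bounded linear functional, and such functionals commute with Bochner integrals, for each fixed $t$ one has $\langle f^*_t,\int_{[0,t]}\Theta(t,s)\rho_s\,\d|\nu|_s\rangle=\int_{[0,t]}\langle f^*_t,\Theta(t,s)\rho_s\rangle\,\d|\nu|_s$. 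Integrating against $q(\d t)$ then gives
\[
\text{LHS}=\int_{[0,T]}\int_{[0,t]}\langle f^*_t,\Theta(t,s)\rho_s\rangle\,\d|\nu|_s\,q(\d t).
\]
The crucial structural observation is positivity: because $f^*_t\in K^*_+$, $\Theta(t,s)\in\mathcal{L}^+(X)$ and $\rho_s\in K_+$, the integrand $\langle f^*_t,\Theta(t,s)\rho_s\rangle$ is nonnegative. Hence the whole computation may be carried out with values in $[0,\infty]$, no a priori integrability of $f^*$ is needed, and a Tonelli-type (rather than Fubini-type) statement is exactly the right formulation.

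The main technical obstacle is to verify that $(s,t)\mapsto\langle f^*_t,\Theta(t,s)\rho_s\rangle$ is jointly measurable, since only then does Tonelli apply. The delicate point is the map $(s,t)\mapsto\Theta(t,s)\rho_s$, which couples the strongly continuous operator family $\Theta$ with the merely Bochner measurable integrand $\rho$. I would handle this by approximation: by Bochner measurability of $\rho$ choose simple functions $\rho^n_s=\sum_k y^n_k\,\mathbf{1}_{A^n_k}(s)$ with $\rho^n_s\to\rho_s$ for a.e.\ $s$. For each $n$, $(s,t)\mapsto\Theta(t,s)\rho^n_s=\sum_k \mathbf{1}_{A^n_k}(s)\,\Theta(t,s)y^n_k$ is jointly measurable, because each $(s,t)\mapsto\Theta(t,s)y^n_k$ is jointly continuous by strong continuity of $\Theta$; passing to the limit (using strong continuity and local boundedness of $\Theta$) yields joint Bochner measurability of $(s,t)\mapsto\Theta(t,s)\rho_s$. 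Pairing with the measurable process $f^*$, which is constant in $s$, then gives joint measurability of the scalar integrand. This is the only genuine piece of work; everything else is bookkeeping with the definitions.

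With joint measurability and nonnegativity in hand, I would invoke Tonelli's theorem to interchange the order of integration over the triangle, so that for fixed $s$ the variable $t$ runs over $[s,T]$:
\[
\text{LHS}=\int_{[0,T]}\left(\int_{[s,T]}\langle f^*_t,\Theta(t,s)\rho_s\rangle\,q(\d t)\right)\d|\nu|_s.
\]
For fixed $s$, $\rho_s$ is a fixed element of $X$, so I convert via the adjoint, $\langle f^*_t,\Theta(t,s)\rho_s\rangle=\langle\Theta^*(t,s)f^*_t,\rho_s\rangle$, and pull the fixed $\rho_s$ out of the $q$-integral. Interpreting the $X^*$-valued object pointwise in $x\in D$ (equivalently, as a Bochner integral when finite) and using scalar Tonelli once more in the variables $(t,x)$, the inner integral equals $\big\langle\int_s^T\Theta^*(t,s)f^*_t\,q(\d t),\,\rho_s\big\rangle$. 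Writing $g^*_s:=\int_s^T\Theta^*(t,s)f^*_t\,q(\d t)\in K^*_+$, which is measurable in $s$ by the same approximation argument, the display becomes $\int_{[0,T]}\langle g^*_s,\rho_s\rangle\,\d|\nu|_s$. By definition \eqref{def-int1} this is precisely $\int_0^T\langle g^*_s,\d\nu_s\rangle$, i.e.\ the right-hand side of the claim. (Note that although $g^*$ need not be $\mathbb{F}$-adapted, this is immaterial here, since \eqref{def-int1} is a pathwise integral against $\d|\nu|_s$ and is well defined for any measurable $K^*_+$-valued integrand.) This completes the argument.
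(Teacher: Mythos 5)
Your proposal is correct and follows essentially the same route as the paper's proof: unfold the inner integral via \eqref{def-int2}, apply the scalar Tonelli theorem on the triangle $\{s\le t\}$ with respect to $\d|\nu|_s\otimes q(\d t)$, pass to the adjoint, and repackage via \eqref{def-int1}. The only difference is that you make explicit the joint measurability of $(s,t)\mapsto\langle f^*_t,\Theta(t,s)\rho_s\rangle$ and the role of positivity in justifying Tonelli, points the paper leaves implicit; this is a welcome refinement rather than a different argument.
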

\begin{proof}
Let $\nu \in \mathcal{S}$ and fix $\omega\in\Omega$ -- a random parameter that will not be stressed as an argument throughout this proof. Recall that $\d|\nu|$ denotes the finite-variation measure on $[0,T]$ associated to the $K_+$-valued finite-variation measure  $d\nu$ on $[0,T]$. Then, by \eqref{def-int2} and classical Tonelli's theorem 
\begin{eqnarray*}
& \displaystyle \int_0^T  \left\langle f^*_t, \int_0^t \Theta(t,s) \d \nu_s \right\rangle q(\d t) = \int_0^T  \left\langle f^*_t, \int_0^t \Theta(t,s) \rho_s \d|\nu|_s \right \rangle \, q(\d t) \nonumber \\
& \displaystyle = \int_0^T  \int_0^T \mathds{1}_{\{(\bar{s},\bar{t}) \in [0,T]^2:\,\bar{s}\leq \bar{t}\}}(t,s) \big\langle f_t^*,\Theta(t,s) \rho_s \big\rangle \d|\nu|_s \, q(\d t) \nonumber \\
& \displaystyle = \int_0^T  \int_s^T \big\langle f^*_t,\Theta(t,s) \rho_s \big \rangle q(\d t)  \, \d|\nu|_s 
\displaystyle = \int_0^T  \int_s^T \big\langle \Theta^*(t,s)f^*_t q(\d t) , \rho_s \big \rangle  \, \d|\nu|_s\\&
\displaystyle\int_0^T  \left\langle\int_s^T  \Theta^*(t,s) f^*_t\, q(\d t),  \, \rho_s  \d|\nu|_s \right \rangle=  \int_0^T \left\langle \int_s^T \Theta^*(t,s) f^*_t\, q(\d t), \d \nu_s \right \rangle, \nonumber 
\end{eqnarray*}
concluding the proof.
\end{proof}

%%%%%%%%%%%%%%%%%%%%%%%%%%%%%%%%%%%%%%%%%%%%

\subsection{The Optimal Control Problem}
\label{sec:OCproblem}

Bearing in mind the definitions of the last section, for any given and fixed $\mathbf{y}\in K_+$ and $\nu \in \mathcal{S}$, we now consider the abstract equation in $X$:
\begin{equation}
\label{eq:abstract}
\begin{cases}
\d Y_t=\A Y_t\,\d t+ \d \nu_t,\\
Y_{0^-}=\mathbf{y}.
\end{cases}
\end{equation}
By writing $Y_{0^-}=\mathbf{y}$ we intend that we set $Y \equiv \mathbf{y}$ on $[-\varepsilon,0)$, for a given and fixed $\varepsilon>0$. In this way, $Y$ might have an initial jump of size $Y_0 - \mathbf{y}$, due to a possible initial jump of the right-continuous process $\nu$ (cf.\ Remark \ref{rem:nu0}).
Following the classical semigroup approach (see, e.g., \cite{EN}), for any $t\geq 0$, we define the \emph{mild solution} to \eqref{eq:abstract} to be the process
\begin{equation}
\label{mild}
Y_t^{\mathbf{y},\nu}:=e^{t\A} \mathbf{y}+\int_0^t e^{(t-s)\A}\, \d \nu_s, \qquad Y_{0^-}=\mathbf{y}.
\end{equation} 
The expression above can be thought of as the counterpart, in an abstract setting, of the so-called ``variation of constants formula'' of the finite-dimensional setting, and it allows to give a rigorous sense to \eqref{eq:abstract} even when the initial datum $\mathbf{y} \notin \mathcal{D}(\mathcal{A})$. Notice that since $(e^{t\A})_{t\geq 0}$ is positivity-preserving, $y\in K_+$, and $\nu$ is nondecreasing, we have that $Y^{\mathbf{y},\nu}:=(Y_t^{\mathbf{y},\nu})_{t\geq 0}$ takes values in $K_+$. 

Let $\Phi^*$ be an $\mathbb{F}$-adapted $K^*_+$-valued stochastic process with c\`adl\`ag (right-continuous with left-limits) paths, and take $\Pi:\Omega \times [0,T]\times K_+\to \R_+$ measurable. We define the convex set of \emph{admissible controls}
\begin{equation}
\label{set:C}
\mathcal{C}:=\Big\{\nu \in \mathcal{S}:\,\,\E\left[\int_0^T\langle \Phi^*_t,\,\d \nu_t\rangle \right] < \infty\Big\}.
\end{equation}
Then, for any $\mathbf{y} \in K_+$, $\nu\in\mathcal{C}$ 
%\begin{equation}
%\label{eq:finitecost}
%\E\left[\int_0^T\langle \Phi^*_t,\,\d \nu_t\rangle \right] < \infty,
%\end{equation}
we consider the performance criterion
$$\mathcal{J}(\mathbf{y},\nu):=\E\left[\int_0^T \Pi\left(t, Y_t^{\mathbf{y},\nu}\right) q(\d t) -  \int_0^T \langle \Phi^*_t,\,\d \nu_t\rangle \right],$$
where $q$ is a finite nonnegative measure on $([0,T],\mathcal{B}([0,T]))$. Note that $\mathcal{J}(\mathbf{y},\nu)$ is well defined (possibly equal to $+\infty$), due to the definition of $\mathcal{C}$.

We then consider the following optimal control problem: 
\begin{equation}
\label{OCproblem}
v(\mathbf{y}):=\sup_{\nu\in\mathcal{C}} \mathcal{J}(\mathbf{y},\nu).
\end{equation}
Clearly, denoting by $\mathbf{0}$ the null element of $\mathcal{C}$, we have $\mathcal{J}(\mathbf{y},\mathbf{0}) \geq0$. Hence,
$$
0\leq v(\mathbf{y}) \leq +\infty \ \ \ \forall \mathbf{y}\in K_+.
$$
We say that $\nu^{\star}\in\mathcal{C}$ is optimal for problem \eqref{OCproblem} if it is such that $\mathcal{J}(\mathbf{y},\nu^{\star})=v(\mathbf{y})$.

\begin{remark} 
In the following results of this paper, the choice of $X=L^p(D,\mathcal{M},\mu)$ with $p\in(1,\infty)$ is not strictly necessary. Indeed, what we really use is that $X$ is a reflexive Banach lattice. 
\end{remark}

%%%%%%%%%%%%%%%%%%%%%%%%%%%%%%%%%%%%%%%%%%%%%%%%%%%%%%%%%%

\section{Characterization of optimal controls by necessary and sufficient first-order conditions}
\label{Solution}

In this section we derive sufficient and necessary conditions for the optimality of a control $\nu^{\star}\in\mathcal{C}$. Let us introduce the set 
$$
S_{\mathbf{y}}(t):=\{\mathbf{k}\in K_+: \ \mathbf{k}\geq e^{t\A}\mathbf{y}\}, \ \ \ t\in[0,T].
$$
Notice that the positivity-preserving property of the semigroup $e^{t\mathcal{A}}$ yields 
$$
 Y^{\mathbf{y},\nu}_t\in S_{\mathbf{y}}(t), \ \  \forall t\in [0,T], \ \forall \nu\in \mathcal{C}.
$$
The next assumption will be standing thoroughout the rest of this paper.
\newpage 

\begin{assumption}
\label{ass:U}
\begin{enumerate}[(i)]
\item[]
\item $\Pi:\Omega\times[0,T]\times K_+\to\R_+$ is such that 
$\Pi(\omega, t,\cdot)$ is concave, nondecreasing, and of class $C^1(S_{\mathbf{y}}(t);\R)$ for each $(\omega,t)\in\Omega\times [0,T]$. Moreover, for any $z \in K_+$, the stochastic process $\Pi(\cdot,\cdot,z): \Omega \times [0,T] \to \R_+$ is $\mathbb{F}$-progressively measurable.
\item $\mathcal{J}(\mathbf{y},\nu)<\infty$  for each $\nu\in\mathcal{C}$.
\end{enumerate}
\end{assumption}

\begin{remark}
\begin{itemize}
\item[(a)] The condition $\mathcal{J}(\mathbf{y},\nu)<\infty$ for each $\nu\in\mathcal{C}$ required in Assumption \ref{ass:U}-(ii) is clearly verified when $\Pi$ is bounded. On the other hand, sufficient conditions guaranteeing Assumption \ref{ass:U}-(ii) in the case of a possibly unbounded $\Pi$ should be determined on a case by case basis as they may depend on the structures of $\Pi$, $\mathcal{A}$, and $\Phi^*$. We will provide a set of such conditions for the separable case studied in Section \ref{sec:explicit} (see Lemma \ref{lem:finitefunctional}).
\item [(b)] Notice that the smoothness condition on $\Pi(\omega,t,\cdot)$ can be relaxed by employing in the following proofs the supergradient of $\Pi$ instead of its gradient. However, we prefer to work under this reasonable regularity requirement in order to simplify exposition.
\item[(c)] If it exists, an optimal control for problem \eqref{OCproblem} is unique whenever $\Pi(\omega, t,\cdot)$ is strictly concave for each $(\omega,t)\in\Omega\times [0,T]$.
\end{itemize}
\end{remark}

In the following, by $\nabla \Pi$ we denote the gradient of $\Pi$ with respect to the last argument. Note that the map
$\nabla \Pi(t,\cdot)$ defined on $\mbox{ri}(K_+)$ takes values in $K^*_+$ by monotonicity of $\Pi(t,\cdot)$, and that it is nonincreasing by concavity of $\Pi(t,\cdot)$.
The following lemma ensures that some integrals with respect to $\d \nu-\d \nu'$ for $\nu,\nu'\in \mathcal{C}$ appearing in our subsequent analysis are well-posed.

\begin{lemma}
\label{lemmafinito} 
Let $\nu\in \mathcal{C}$. Then 
$$
\E\bigg[\int_0^T \Big\langle \int_s^Te^{(t-s)\A^*} \nabla \Pi\left(t,Y_t^{\mathbf{y},\nu}\right) q(\d t), \ \d \nu_s  \Big \rangle\,\d t \bigg]<\infty.
$$
\end{lemma}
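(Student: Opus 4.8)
The plan is to rewrite the quantity of interest by means of the Tonelli-type identity of Lemma \ref{lemma:fubini}, and then to dominate it by $\E\big[\int_0^T \Pi(t,Y_t^{\mathbf{y},\nu})\,q(\d t)\big]$, which is finite thanks to $\nu \in \mathcal{C}$ together with Assumption \ref{ass:U}-(ii). As a preliminary observation, note that the integrand is nonnegative: $\nabla\Pi(t,Y_t^{\mathbf{y},\nu})\in K^*_+$ by monotonicity of $\Pi(t,\cdot)$, the operators $e^{(t-s)\A^*}$ are positivity-preserving, and $\d\nu$ is a $K_+$-valued measure. Hence every integral below is well defined with values in $[0,+\infty]$, and it suffices to exhibit a finite upper bound for the expectation. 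I would then apply Lemma \ref{lemma:fubini} with $f^*_t := \nabla\Pi(t,Y_t^{\mathbf{y},\nu})$ (which is $\mathbb{F}$-adapted and $K^*_+$-valued, its joint measurability following from progressive measurability of $\Pi(\cdot,\cdot,z)$ and continuity of $\nabla\Pi(t,\cdot)$) and $\Theta(t,s):=e^{(t-s)\A}$ (strongly continuous and $\mathcal{L}^+(X)$-valued by Assumption \ref{ass:A}), which gives
$$\int_0^T \Big\langle \int_s^T e^{(t-s)\A^*}\nabla\Pi(t,Y_t^{\mathbf{y},\nu})\,q(\d t),\,\d\nu_s\Big\rangle = \int_0^T \Big\langle \nabla\Pi(t,Y_t^{\mathbf{y},\nu}),\,\int_0^t e^{(t-s)\A}\,\d\nu_s\Big\rangle q(\d t).$$

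Next, I would use the mild-solution formula \eqref{mild}, namely $\int_0^t e^{(t-s)\A}\d\nu_s = Y_t^{\mathbf{y},\nu} - e^{t\A}\mathbf{y}$, so that the right-hand side above equals $\int_0^T \langle \nabla\Pi(t,Y_t^{\mathbf{y},\nu}),\,Y_t^{\mathbf{y},\nu} - e^{t\A}\mathbf{y}\rangle\,q(\d t)$. Both $Y_t^{\mathbf{y},\nu}$ and $e^{t\A}\mathbf{y}$ belong to the convex set $S_{\mathbf{y}}(t)$ (the latter being its minimal element), on which $\Pi(t,\cdot)$ is concave and of class $C^1$ by Assumption \ref{ass:U}-(i). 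The supergradient inequality for concave functions then yields, pointwise in $(t,\omega)$,
$$\big\langle \nabla\Pi(t,Y_t^{\mathbf{y},\nu}),\,Y_t^{\mathbf{y},\nu} - e^{t\A}\mathbf{y}\big\rangle \leq \Pi(t,Y_t^{\mathbf{y},\nu}) - \Pi(t,e^{t\A}\mathbf{y}) \leq \Pi(t,Y_t^{\mathbf{y},\nu}),$$
the last step using $\Pi \geq 0$.

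Finally, integrating against $q$, taking expectations, and recalling the definition of $\mathcal{J}$, I would conclude with the estimate $\E\big[\int_0^T \langle \nabla\Pi(t,Y_t^{\mathbf{y},\nu}),\,Y_t^{\mathbf{y},\nu} - e^{t\A}\mathbf{y}\rangle\,q(\d t)\big] \leq \E\big[\int_0^T \Pi(t,Y_t^{\mathbf{y},\nu})\,q(\d t)\big] = \mathcal{J}(\mathbf{y},\nu) + \E\big[\int_0^T \langle \Phi^*_t,\,\d\nu_t\rangle\big]$, whose right-hand side is finite: the first term by Assumption \ref{ass:U}-(ii) and the second because $\nu \in \mathcal{C}$. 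I expect the only delicate points to be the verification of the measurability and integrability hypotheses required to invoke Lemma \ref{lemma:fubini} (in particular the joint measurability of $t\mapsto\nabla\Pi(t,Y_t^{\mathbf{y},\nu})$ and the integrability needed for the Tonelli interchange), and the justification that $e^{t\A}\mathbf{y}\in S_{\mathbf{y}}(t)$ so that the supergradient inequality legitimately applies; both reduce to the structural content of Assumptions \ref{ass:A} and \ref{ass:U}.
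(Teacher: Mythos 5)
Your proposal is correct and follows essentially the same route as the paper: both rest on Lemma \ref{lemma:fubini} with $\Theta(t,s)=e^{(t-s)\A}$ and $f^*_t=\nabla\Pi(t,Y_t^{\mathbf{y},\nu})$, the mild-solution identity $\int_0^t e^{(t-s)\A}\d\nu_s = Y_t^{\mathbf{y},\nu}-e^{t\A}\mathbf{y}$, the concavity (supergradient) inequality, and the finiteness of $\mathcal{J}(\mathbf{y},\nu)$ together with $\E\big[\int_0^T\langle\Phi^*_t,\d\nu_t\rangle\big]<\infty$ from $\nu\in\mathcal{C}$. The paper merely packages the same chain of estimates as a lower bound on $\mathcal{J}(\mathbf{y},\nu)-\mathcal{J}(\mathbf{y},\mathbf{0})$, while you bound the target quantity directly from above; the two are equivalent.
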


\begin{proof} 
Recall that $\mathbf{0}$ denotes the null element of $\mathcal{C}$. Then, using  concavity of $\Pi(t,\cdot)$, \eqref{mild}, Lemma \ref{lemma:fubini} (with $\Theta(t,s)=e^{(t-s)\mathcal{A}}$ and $f^*_t=\nabla \Pi\left(t,Y_t^{\mathbf{y},\nu}\right)$), the fact that $\mathcal{J}(\mathbf{y},\nu)<\infty$ for any $\nu\in \mathcal{C}$ by Assumption \ref{ass:U}-(ii), as well as that $\mathcal{J}(\mathbf{y},\mathbf{0})\geq0$, we can write 
\begin{equation}
\label{eq:proofFOCs}
\begin{split}
\infty> \  \mathcal{J}(\mathbf{y},\nu) - \mathcal{J}(\mathbf{y},\mathbf{0})&=\E\left[\int_0^T\left(\Pi(t,Y_t^{\mathbf{y},\nu})-\Pi(t,e^{t\A}\mathbf{y})\right) q(\d t) - \int_0^T \langle\Phi^*_t,\d \nu_t\rangle\right] \\&\geq 
 \E\bigg[\int_0^T\Big\langle \nabla \Pi\left(t,Y_t^{\mathbf{y},\nu}\right), \ Y_t^{\mathbf{y},\nu} - e^{t\A}\mathbf{y}\Big\rangle \,q(\d t) - \int_0^T \langle\Phi^*_t,\d \nu_t\rangle\bigg]\\
&= \E\bigg[\int_0^T\Big\langle \nabla \Pi\left(t,Y_t^{\mathbf{y},\nu}\right),  \ \int_0^t e^{(t-s)\A} \d \nu_s  \Big\rangle \,q(\d t) - \int_0^T \langle\Phi^*_t,\d \nu_t\rangle \bigg] \\
 &= \E\bigg[\int_0^T \Big\langle \int_s^Te^{(t-s)\A^*} \nabla \Pi\left(t,Y_t^{\mathbf{y},\nu}\right) q(\d t), \ \d \nu_s  \Big \rangle - \int_0^T \langle\Phi^*_s,\d \nu_s\rangle\bigg].
\end{split}  
\end{equation}
The claim follows by definition of $\mathcal{C}$.
\end{proof}

%The next technical result will be used in the proof of Theorem \ref{prop:FOC}.
%
%\begin{lemma}
%\label{sal}
%Let $\mathcal{G}\subseteq \mathcal{F}$ be a $\sigma$-algebra and let $\{Z(x)\}_{x\in D}$ be a family of $\mathcal{G}$-random variables. The map $Q:\Omega\to X$ whose version (in $D$) is defined by 
%$$[Q(\omega)](x):=Z(x)(\omega), \ \ \ x\in D,$$
%is a $\mathcal{G}$-random variable.
%\end{lemma}
%
%\begin{proof}
%{\color{red}{TO BE DONE.}}
%\end{proof}

\begin{theorem}
\label{prop:FOC} 
A control $\nu^{\star}\in\mathcal{C}$ is optimal for Problem \eqref{OCproblem} {if and only if} the following First-Order Conditions (FOCs) hold true:
\begin{enumerate}[(i)]
\item For every $\nu\in \mathcal{C}$
$$\E\left[\int_0^T\Big\langle \E\left[\int_s^Te^{(t-s)\A^*} \nabla \Pi\left(t,Y_t^{\mathbf{y},\nu^{\star}}\right)q(\d t)\,\Big|\,  \mathcal{F}_s\right]-\Phi^*_s,\d \nu_s\Big\rangle  \right]\leq 0;$$
\item the following equality holds:
$$\E\left[\int_0^T\Big\langle \E\left[\int_s^Te^{(t-s)\A^*} \nabla \Pi\left(t,Y_t^{\mathbf{y},\nu^{\star}}\right)q(\d t)\,\Big|\,  \mathcal{F}_s\right]-\Phi^*_s,\d \nu^{\star}_s\Big\rangle  \right]= 0.$$
\end{enumerate}
\end{theorem}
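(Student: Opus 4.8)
The plan is to turn Problem \eqref{OCproblem} into a concave maximization over a convex cone and characterize the maximizer by the usual variational inequality, then split it into the two FOCs using the cone structure. First I note that $\mathcal{C}$ is a convex cone (it is stable under addition and under multiplication by nonnegative scalars, and contains $\mathbf{0}$), and that $\mathcal{J}(\mathbf{y},\cdot)$ is concave on $\mathcal{C}$: the mild-solution map $\nu\mapsto Y^{\mathbf{y},\nu}$ is affine because $\int_0^t e^{(t-s)\A}\d\nu_s$ is linear in the vector measure $\d\nu$ by \eqref{def-int2}, $\Pi(t,\cdot)$ is concave by Assumption \ref{ass:U}-(i), and the cost term $\nu\mapsto\E\int_0^T\langle\Phi^*_t,\d\nu_t\rangle$ is linear. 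For a fixed $\nu^{\star}\in\mathcal{C}$ I abbreviate
$$\ell^{\star}_s := \E\Big[\int_s^T e^{(t-s)\A^*}\nabla\Pi\big(t, Y^{\mathbf{y},\nu^{\star}}_t\big)\,q(\d t)\,\Big|\,\mathcal{F}_s\Big]\in K^*_+,$$
so that FOC (i) reads $\E\int_0^T\langle\ell^{\star}_s-\Phi^*_s,\d\nu_s\rangle\le 0$ for all $\nu\in\mathcal{C}$ and FOC (ii) reads $\E\int_0^T\langle\ell^{\star}_s-\Phi^*_s,\d\nu^{\star}_s\rangle=0$. The computational engine used in both directions is the identity, valid for every $\nu\in\mathcal{C}$,
$$\E\int_0^T\Big\langle\nabla\Pi\big(t, Y^{\mathbf{y},\nu^{\star}}_t\big),\ \int_0^t e^{(t-s)\A}\d\nu_s\Big\rangle q(\d t) = \E\int_0^T\langle\ell^{\star}_s,\d\nu_s\rangle,$$
obtained by applying Lemma \ref{lemma:fubini} with $\Theta(t,s)=e^{(t-s)\A}$ and $f^*_t=\nabla\Pi(t,Y^{\mathbf{y},\nu^{\star}}_t)$, followed by \eqref{DM-opt} to insert the conditional expectation. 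Since $e^{t\A^*}$ preserves $K^*_+$, $\nabla\Pi\in K^*_+$ by monotonicity of $\Pi(t,\cdot)$, and $\d\nu\ge 0$, all integrands are nonnegative, so these manipulations are legitimate in $[0,+\infty]$ irrespective of finiteness.

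For sufficiency, I assume the FOCs and fix an arbitrary $\nu\in\mathcal{C}$. The supergradient inequality for the concave $\Pi(t,\cdot)$ gives $\Pi(t,Y^{\mathbf{y},\nu}_t)-\Pi(t,Y^{\mathbf{y},\nu^{\star}}_t)\le\langle\nabla\Pi(t,Y^{\mathbf{y},\nu^{\star}}_t),\,Y^{\mathbf{y},\nu}_t-Y^{\mathbf{y},\nu^{\star}}_t\rangle$. Writing $Y^{\mathbf{y},\nu}_t-Y^{\mathbf{y},\nu^{\star}}_t=\int_0^t e^{(t-s)\A}\d\nu_s-\int_0^t e^{(t-s)\A}\d\nu^{\star}_s$, integrating against $q(\d t)$, taking expectations, and applying the engine identity to each of the two terms, I get $\mathcal{J}(\mathbf{y},\nu)-\mathcal{J}(\mathbf{y},\nu^{\star})\le\E\int_0^T\langle\ell^{\star}_s-\Phi^*_s,\d\nu_s\rangle-\E\int_0^T\langle\ell^{\star}_s-\Phi^*_s,\d\nu^{\star}_s\rangle\le 0$, the last step by FOC (i) and FOC (ii). Hence $\mathcal{J}(\mathbf{y},\nu)\le\mathcal{J}(\mathbf{y},\nu^{\star})$ for all $\nu\in\mathcal{C}$, i.e.\ $\nu^{\star}$ is optimal.

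For necessity, I assume $\nu^{\star}$ optimal and, for $\nu\in\mathcal{C}$ and $\epsilon\in(0,1]$, set $\nu^{\epsilon}:=(1-\epsilon)\nu^{\star}+\epsilon\nu\in\mathcal{C}$, for which $Y^{\mathbf{y},\nu^{\epsilon}}_t=Y^{\mathbf{y},\nu^{\star}}_t+\epsilon(Y^{\mathbf{y},\nu}_t-Y^{\mathbf{y},\nu^{\star}}_t)$. Optimality gives $\epsilon^{-1}\big(\mathcal{J}(\mathbf{y},\nu^{\epsilon})-\mathcal{J}(\mathbf{y},\nu^{\star})\big)\le 0$. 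Since $\epsilon\mapsto\Pi(t,Y^{\mathbf{y},\nu^{\epsilon}}_t)$ is concave, its difference quotients increase as $\epsilon\downarrow 0$ to $\langle\nabla\Pi(t,Y^{\mathbf{y},\nu^{\star}}_t),\,Y^{\mathbf{y},\nu}_t-Y^{\mathbf{y},\nu^{\star}}_t\rangle$, with integrable minorant given by the $\epsilon=1$ quotient $\Pi(t,Y^{\mathbf{y},\nu}_t)-\Pi(t,Y^{\mathbf{y},\nu^{\star}}_t)$ (both profit integrals being finite under Assumption \ref{ass:U}). Monotone convergence and the linearity of the cost term then yield $\lim_{\epsilon\downarrow 0}\epsilon^{-1}(\mathcal{J}(\mathbf{y},\nu^{\epsilon})-\mathcal{J}(\mathbf{y},\nu^{\star}))=\E\int_0^T\langle\ell^{\star}_s-\Phi^*_s,\d(\nu-\nu^{\star})_s\rangle=:G(\nu)-G(\nu^{\star})\le 0$, where $G(\nu):=\E\int_0^T\langle\ell^{\star}_s-\Phi^*_s,\d\nu_s\rangle$ is additive and positively homogeneous in $\nu$. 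I close by using the cone structure of $\mathcal{C}$: choosing $\nu=\mathbf{0}$ gives $G(\nu^{\star})\ge 0$, while choosing $\nu=2\nu^{\star}\in\mathcal{C}$ gives $2G(\nu^{\star})\le G(\nu^{\star})$, hence $G(\nu^{\star})\le 0$; therefore $G(\nu^{\star})=0$, which is FOC (ii), and substituting this back leaves $G(\nu)\le 0$ for every $\nu\in\mathcal{C}$, which is FOC (i).

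The main obstacle is the rigorous interchange of the limit (equivalently, the differentiation in $\epsilon$) with $\E\int_0^T(\cdot)\,q(\d t)$, together with the use of Lemma \ref{lemma:fubini} and \eqref{DM-opt} on the \emph{signed} measure $\d(\nu-\nu^{\star})$. I plan to handle this by always splitting $\d(\nu-\nu^{\star})$ into the two nonnegative vector measures $\d\nu$ and $\d\nu^{\star}$, so that Tonelli's identity of Lemma \ref{lemma:fubini} and the conditioning identity \eqref{DM-opt} apply term by term in $[0,+\infty]$, and by exploiting concavity to make the difference quotients monotone with the integrable minorant just described, so that monotone convergence applies. Finiteness of the resulting cross-terms is then not assumed but recovered: in the necessity part it is forced by optimality (the quotients are $\le 0$, so their increasing limit is finite), and in the sufficiency part it is forced by the FOCs; Lemma \ref{lemmafinito} guarantees well-posedness of the diagonal terms.
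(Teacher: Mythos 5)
Your proposal is correct, and its sufficiency half is essentially identical to the paper's (supergradient inequality, Lemma \ref{lemma:fubini}, \eqref{DM-opt}, then the two FOCs). The necessity half, however, takes a genuinely different route at its two delicate points. First, in passing to the limit $\varepsilon\downarrow 0$ along $\nu^{\varepsilon}=(1-\varepsilon)\nu^{\star}+\varepsilon\nu$: the paper bounds the difference quotient below by $\langle \nabla\Pi(t,Y^{\varepsilon}_t),Y_t-Y^{\star}_t\rangle$, i.e.\ with the gradient evaluated at the \emph{perturbed} state, and then interchanges limit and expectation via Fatou's lemma on the $\d\nu$-integral and dominated convergence on the $\d\nu^{\star}$-integral, the domination resting on the estimate $Y^{\varepsilon}\geq\tfrac12 Y^{\star}$ together with the monotonicity of $\nabla\Pi(t,\cdot)$ (cf.\ \eqref{integrabilityepsi-1}--\eqref{integrabilityepsi-2}); you instead use the monotonicity in $\varepsilon$ of the difference quotients of the concave map $\varepsilon\mapsto\Pi(t,Y^{\varepsilon}_t)$ and apply monotone convergence with the $\varepsilon=1$ quotient as integrable minorant, which lands directly on $\nabla\Pi(t,Y^{\star}_t)$ and bypasses the domination estimate entirely. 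Second, where the paper's Step 2 proves that the auxiliary linear problem has zero value by contradiction, constructing the explicit perturbation $\overline{\nu}=\nu^{\star}+\mathds{1}_{E\times[t_o,T]\times A}$ (and thereby obtaining the stronger pointwise statement $\Psi^{\star}\leq 0$ a.e.), you use only the cone structure of $\mathcal{C}$, testing the variational inequality $G(\nu)\leq G(\nu^{\star})$ with $\nu=\mathbf{0}$ and $\nu=2\nu^{\star}$ to force $G(\nu^{\star})=0$. Your version is shorter and closer to the classical Kuhn--Tucker argument and requires no auxiliary integrability of $\Phi^*$ at a fixed time; the paper's version buys the pointwise sign information on $\Psi^{\star}$, which is more than conditions (i)--(ii) strictly demand but is the form in which such first-order conditions are typically exploited (e.g.\ for flat-off characterizations). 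Both arguments ultimately rest on the same pillars --- Lemma \ref{lemma:fubini}, identity \eqref{DM-opt}, and Lemma \ref{lemmafinito} for the finiteness of the $\d\nu^{\star}$ cross term --- and your handling of possibly infinite cross terms (recovering finiteness from optimality rather than assuming it) is sound.
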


\begin{proof}
\emph{Sufficiency.} Let $\nu^{\star} \in \mathcal{C}$ satisfying (i)--(ii) above, and let $\nu \in \mathcal{C}$ be arbitrary. 
By Lemma \ref{lemma:fubini} we have (after taking expectations)
\begin{equation}\label{Fubini}
\begin{split}
& \E\bigg[\int_0^T\Big\langle \nabla \Pi\left(t,Y_t^{\mathbf{y},\nu^{\star}}\right), \ \int_0^t e^{(t-s)\A} \big(\d \nu^{\star}_s - \d \nu_s\big) \Big\rangle q(\d t)- \int_0^T \langle \Phi^*_s, \d \nu^{\star}_s-\d \nu_s\rangle\bigg] \\
& =  \E\bigg[\int_0^T \Big\langle \int_s^Te^{(t-s)\A^*} \nabla \Pi\left(t,Y_t^{\mathbf{y},\nu^{\star}}\right)q(\d t) -\Phi^*_s, \ \d \nu^{\star}_s - \d \nu_s \Big \rangle\bigg].  
 \end{split}
\end{equation}
Notice that the previous quantity is well defined due to Lemma \ref{lemmafinito}.
Moreover, by \eqref{DM-opt},
\begin{equation}
\label{op}
\begin{split}
& \E\bigg[\int_0^T \Big\langle \int_s^Te^{(t-s)\A^*} \nabla \Pi\left(t,Y_t^{\mathbf{y},\nu^{\star}}\right)q(\d t) -\Phi^*_s, \ \d \nu^{\star}_s - \d \nu_s \Big \rangle\bigg] \\
%&= \E\bigg[\E\left[\int_0^T \Big\langle \int_s^Te^{(t-s)\A^*} \nabla \Pi\left(t,Y_t^{\mathbf{y},\nu^{\star}}\right)q(\d t) -\Phi^*_s, \ \d \nu^{\star}_s - \d \nu_s \Big \rangle\,\Big|\,\mathcal{F}_s\right]\bigg]\\ 
%&= \E\bigg[\int_0^T \E\left[\Big\langle \int_s^Te^{(t-s)\A^*} \nabla \Pi\left(t,Y_t^{\mathbf{y},\nu^{\star}}\right)q(\d t) -\Phi^*_s, \ \d \nu^{\star}_s - \d \nu_s \Big \rangle\,\Big|\,\mathcal{F}_s\right]\bigg]\\ 
& = \E\bigg[\int_0^T \Big\langle \E\bigg[\int_s^Te^{(t-s)\A^*} \nabla \Pi\left(t,Y_t^{\mathbf{y},\nu^{\star}}\right)q(\d t) -\Phi^*_s\,\Big|\, \mathcal{F}_s\bigg],\ \d \nu^{\star}_s - \d \nu_s \Big \rangle \bigg]. 
\end{split}
\end{equation}

Then, using \eqref{Fubini}-\eqref{op}, concavity of $\Pi(t,\cdot)$, \eqref{mild}, and (i)--(ii), we can write 
\begin{equation*}
\label{eq:proofFOCs-bis}
\begin{split}
 &\mathcal{J}(\mathbf{y},\nu^{\star}) - \mathcal{J}(\mathbf{y},\nu) \geq 
 \E\bigg[\int_0^T\Big\langle \nabla \Pi\left(t,Y_t^{\mathbf{y},\nu^{\star}}\right), \ Y_t^{\mathbf{y},\nu^{\star}}-Y_t^{\mathbf{y},\nu} \Big\rangle\,q(\d t) - \int_0^T \langle \Phi^*_t, \d \nu^{\star}_t-\d \nu_t\rangle \bigg]\\
= &\E\bigg[\int_0^T\Big\langle \nabla \Pi\left(t,Y_t^{\mathbf{y},\nu^{\star}}\right),  \ \int_0^t e^{(t-s)\A} \big(\d \nu^{\star}_s - \d \nu_s\big) \Big\rangle \,q(\d t) - \int_0^T\langle \Phi^*_s, \d \nu^{\star}_s-\d \nu_s\rangle \bigg] \\
 = &\E\bigg[\int_0^T \Big\langle \E\bigg[\int_s^Te^{(t-s)\A^*} \nabla \Pi\left(t,Y_t^{\mathbf{y},\nu^{\star}}\right)q(\d t)\,\Big|\, \mathcal{F}_s\bigg] - \Phi^*_s, \ \d \nu^{\star}_s - \d \nu_s \Big \rangle  \bigg] \geq 0.
 \end{split}  
\end{equation*}
The optimality of $\nu^{\star}$ follows.
\smallskip

\emph{Necessity.}  The proof of the necessity of (i) and (ii) requires some more work with respect to that of their sufficiency, and it is organized in three steps.

Let $\nu^{\star}\in\mathcal{C}$ be optimal for Problem \eqref{OCproblem}.
\vspace{0.25cm}

\emph{Step 1.} In this step, we show that $\nu^{\star}$ solves the linear problem
$$
\sup_{\nu\in \mathcal{C}}\E\left[\int_0^T\left\langle\Psi^\star_t,\d \nu_t\right\rangle \right],
$$
where we have set
\begin{equation}
\label{Psistar}
\Psi^\star_t:=\E\left[\int_t^T\nabla \Pi(s,Y_s^{\mathbf{y},\nu^{\star}}) q(\d s)\,\Big|\,\mathcal{F}_t\right] -\Phi^*_t.
\end{equation}
Notice that $\Psi^\star$ is $\mathbb{F}$-adapted. Moreover, it is c\`adl\`ag since, by assumption, so is $\Phi^*$ and the underlying filtration $\mathbb{F}$ is right-continuous.

Let $\nu\in \mathcal{C}$ be arbitrary and set $\nu^\varepsilon:=\varepsilon \nu+(1-\varepsilon)\nu^{\star}$ for $\varepsilon\in(0,1/2]$. Clearly $\nu^\varepsilon\in \mathcal{C}$ by convexity of $\mathcal{C}$. Set $Y:=Y_t^{\mathbf{y},\nu}$, $Y^\varepsilon:= Y^{\mathbf{y},\nu^\varepsilon}$, $Y^\star:=Y^{\mathbf{y},\nu^{\star}}$, and note that, by linearity of \eqref{eq:abstract}, one has $Y^\varepsilon=Y^\star+\varepsilon(Y-Y^\star)$. By concavity of $\Pi$, optimality of $\nu^{\star}$, and Lemma \ref{lemma:fubini}, one can write
\begin{eqnarray}
\label{estimate-nec-1}
0&\geq & \frac{\mathcal{J}(\mathbf{y},\nu^\varepsilon)-\mathcal{J}(\mathbf{y},\nu^{\star})}{\varepsilon}\ =  
\frac{1}{\varepsilon}\,\E\left[\int_0^T\big(\Pi(t,Y_t^\varepsilon)- \Pi(t,Y_t^\star)\big)\d t - \int_0^T \langle \Phi^*_t,\d \nu^\varepsilon_t-\d \nu^{\star}_t\rangle\right] \nonumber\\
%\ 
%\frac{1}{\varepsilon}\,\E\left[\int_0^T\left(\Pi(t,Y_t^*+\varepsilon(Y_t-Y_t^*))-\Pi(t,Y_t^*)\right)\d t\right]\\
&\geq & \frac{1}{\varepsilon}\E\left[\int_0^T\langle \nabla \Pi(t, Y^\varepsilon_t), Y^\varepsilon_t-Y^\star_t\rangle q(\d t)- \int_0^T\langle \Phi^*_t,\d \nu^\varepsilon_t-\d \nu^{\star}_t\rangle \right] \nonumber \\
&=& \E\left[\int_0^T\langle \nabla \Pi(t, Y^\varepsilon_t), Y_t-Y^\star_t\rangle q(\d t) - \int_0^T \langle \Phi^*_t,\d \nu_t-\d \nu^{\star}_t \rangle\right]  \\
&=& \E\left[\int_0^T \left \langle \nabla \Pi(t,Y_t^\varepsilon), \ \int_0^t e^{(t-s)\mathcal{A}}(\d \nu_s-\d \nu^{\star}_s) \right \rangle q(\d t) - \int_0^T \langle \Phi^*_s,\d \nu_s-\d \nu^{\star}_s\rangle\right] \nonumber \\
& = & \E\left[\int_0^T\left \langle \int_s^Te^{(t-s)\mathcal{A}^*}\nabla \Pi(t,Y^\varepsilon_t)q(\d t) -\Phi^*_s,\, \d \nu_s- \d \nu^{\star}_s\right\rangle \right]. \nonumber
\end{eqnarray}
We notice that the last expectation above is well-defined. Indeed, observing that $Y^{\varepsilon} \geq \frac{1}{2} Y^{\star}$ and that $\nabla \Pi(t,\cdot)$ is nondecreasing, we can write
\begin{align}
\label{integrabilityepsi-1}
& - \int_0^T \langle \Phi^*_s,\,\d \nu^{\star}_s \rangle \  \leq \ \int_0^T\left\langle \int_s^Te^{(t-s)\mathcal{A}^*}\nabla \Pi(t,Y^\varepsilon_t)q(\d t) -\Phi^*_s,\,\d \nu^{\star}_s\right\rangle \nonumber \\
& \leq 2\, \int_0^T\left\langle \int_s^Te^{(t-s)\mathcal{A}^*}\nabla \Pi\Big(t,\frac{1}{2}Y^{\star}_t\Big)q(\d t) -\Phi^*_s,\,\d \Big(\frac{1}{2}\nu^{\star}_s\Big)\right\rangle \\
& = 2\, \int_0^T\left\langle \int_s^Te^{(t-s)\mathcal{A}^*}\nabla \Pi\Big(t,Y^{\frac{1}{2}\mathbf{y}, \frac{1}{2}\nu^{\star}}_t\Big)q(\d t) -\Phi^*_s,\,\d \Big(\frac{1}{2}\nu^{\star}_s\Big)\right\rangle. \nonumber
\end{align}
Hence, the fact that $\nu^{\star}\in \mathcal{C}$, and Lemma \ref{lemmafinito} yield
\begin{equation}
\label{integrabilityepsi-2}
-\infty < \E\left[\int_0^T\left\langle \int_s^Te^{(t-s)\mathcal{A}^*}\nabla \Pi\Big(t,Y^{\frac{1}{2}\mathbf{y}, \frac{1}{2}\nu^{\star}}_t\Big)q(\d t) - \Phi^*_s,\,\d \Big(\frac{1}{2}\nu_s^\star\Big)\right\rangle \right] < \infty.
\end{equation}

From \eqref{estimate-nec-1} we therefore obtain 
\begin{equation}\label{qqq2}
\begin{split}
&\E\left[\int_0^T\left\langle \int_s^Te^{(t-s)\mathcal{A}^*}\nabla \Pi(t,Y^\varepsilon_t)q(\d t) -\Phi^*_s,\, \d \nu^{\star}_s\right\rangle \right]\\
\geq & \, \E\left[\int_0^T\left\langle \int_s^Te^{(t-s)\mathcal{A}^*}\nabla \Pi(t,Y^\varepsilon_t)q(\d t) -\Phi^*_s,\, \d \nu_s\right\rangle \right].
\end{split}
\end{equation}
Now, on the one hand, Fatou's Lemma gives
\begin{equation}\label{fat}
\begin{split}
&\liminf_{\varepsilon\downarrow 0} \E\left[\int_0^T\left\langle \int_s^Te^{(t-s)\mathcal{A}^*}\nabla \Pi(t,Y^\varepsilon_t)q(\d t) -\Phi^*_s,\, \d \nu_s\right\rangle \right]\\
\geq & \, \E\left[\int_0^T\left\langle \int_s^Te^{(t-s)\mathcal{A}^*}\nabla \Pi(t,Y^\star_t)q(\d t) -\Phi^*_s,\, \d \nu_s\right\rangle \right].
\end{split}
\end{equation}
On the other hand, \eqref{integrabilityepsi-1} and \eqref{integrabilityepsi-2} allow to invoke the dominated convergence theorem when taking limits as $\varepsilon \downarrow 0$ and obtain
\begin{equation}\label{fat2}
\begin{split}
&\lim_{\varepsilon\downarrow 0}\E\left[\int_0^T\left\langle \int_s^Te^{(t-s)\mathcal{A}^*}\nabla \Pi(t,Y^\varepsilon_t)q(\d t) -\Phi^*_s,\, \d \nu^{\star}_s\right\rangle \right]\\= &\E\left[\int_0^T\left\langle \int_s^Te^{(t-s)\mathcal{A}^*}\nabla \Pi(t,Y^\star_t)q(\d t) -\Phi^*_s,\, \d \nu^{\star}_s\right\rangle \right]
\end{split}
\end{equation} 
Combining \eqref{qqq2} with \eqref{fat}-\eqref{fat2} provides 
\begin{eqnarray*}
0&\geq &\E\left[\int_0^T\left\langle \int_s^Te^{(t-s)\mathcal{A}^*}\nabla \Pi(t,Y^\star_t)q(\d t) -\Phi^*_s,\, \d \nu_s- \d \nu^{\star}_s\right\rangle \right]\\
&=& \E\left[\int_0^T\left\langle \E\left[\int_s^Te^{(t-s)\mathcal{A}^*}\nabla \Pi(t,Y^\star_t)q(\d t)\,\Big|\,\mathcal{F}_s\right] -\Phi^*_s,\, \d \nu_s- \d \nu^{\star}_s\right\rangle \right].
\end{eqnarray*}
The claim then follows recalling \eqref{Psistar} and by arbitrariness of $\nu \in \mathcal{C}$.
\vspace{0.25cm}

\emph{Step 2.} We now prove that the linear problem of the previous step has zero value; that is,
$$\sup_{\nu\in\mathcal{C}} \E\left[\int_0^T\langle \Psi^\star_t,\d \nu_t\rangle\right] =0,$$
for $\Psi^{\star}$ as in \eqref{Psistar}.

Clearly, by noticing that the admissible control $\nu\equiv0$ is a priori suboptimal, we have
$$\sup_{\nu\in\mathcal{C}} \E\left[\int_0^T\langle \Psi^\star_t,\d \nu_t\rangle\right] \geq 0.$$

To show the reverse inequality, we argue by contradiction, and we assume that there exists $t_o \in [0,T]$ such that $\esssup_{\Omega\times D} \,\Psi^{\star}_{t_o} >0$. Then, since $\Psi^\star$ is $\mathbb{F}$-adapted, there exist $\varepsilon>0$, $A\in \mathcal{M}$ with $\mu(A)>0$, and $E\in\mathcal{F}_{t_o}$ with $\P(E)>0$ such that 
$$\Psi^\star_{t_o}\geq\varepsilon \ \ \ \ \mbox{on} \  E \times A.$$
% with positive measure, and define
%$$\tau_{\varepsilon}(x):=\inf\big\{t\in[0,T]: \ \Psi^\star_t(x)\geq \varepsilon\big\}.$$
Consider the adapted, nondecreasing, nonnegative real-valued process
$$
\overline{\nu}_t:=\nu^{\star}_t + \mathds{1}_{E\times [t_o,T]\times A}.
$$
We clearly have that 
$$\E\left[\int_0^T\langle \Psi^\star_t,\d \overline{\nu}_t\rangle\right] = \E\left[\int_0^T\langle \Psi^\star_t,\d \nu^{\star}_t\rangle\right] +  \varepsilon \P(E)\mu(A) > \E\left[\int_0^T\langle \Psi^\star_t,\d \nu^{\star}_t\rangle\right],$$
thus contradicting that $\nu^{\star}$ is optimal for the linear problem. Hence, $\Psi^\star_t\leq 0$ for all $t\in [0,T]$, a.e.\ in $\Omega \times D$, and this gives that $\sup_{\nu\in\mathcal{C}} \E\left[\int_0^T\langle \Psi^\star_t,\d \nu_t\rangle\right]\leq0$.
\vspace{0.25cm}

\emph{Step 3.} The final claim follows by combinig Steps 1 and 2.
\end{proof}

\begin{remark}
\label{BVcase}
The proof of Theorem \ref{prop:FOC} hinges on the concavity of the running profit function with respect to the controlled state $Y^{\mathbf{y},\nu}$, and on the affine structure of the mapping $\nu \mapsto Y^{\mathbf{y},\nu}$. It is then reasonable to expect that one might derive necessary and sufficient first-order conditions for optimality also when $Y^{\mathbf{y},\nu}$ evolves as in \eqref{eq:abstract}, but $t \mapsto \nu_t$ is a process with paths of (locally) bounded variation. In such a case, our approach still applies by identifying each admissible control $\nu$ with a (random) signed countably additive vector measure $\nu: \mathcal{B}([0,T]) \to X$ of finite variation.
\end{remark}

%%%%%%%%%%%%%%%%%%%%%%%%%%%%%%%%%%%%%%%%%%

\section{The case in which $\mathcal{A}$ generates a group}
\label{sec:group}

In this section we will consider the case when $\mathcal{A}$ generates a $C_0$-\emph{group} of  {positivity-preserving  operators}. In this case, since for any given $t\geq0$ we can define the inverse $e^{-t\A}$, the controlled dynamics \eqref{mild} takes the separable form 
\begin{equation}
\label{mild2}
Y_t^{\mathbf{y},\nu}=e^{t\A}\Big[\mathbf{y}+ \widehat{\nu}_t\Big] = e^{t\A} \widehat{Y}_t^{\mathbf{y},\widehat{\nu}}, \qquad Y^{\mathbf{y},\nu}_{0^-}=\mathbf{y},
\end{equation}
where, for any $\nu \in \mathcal{S}$, we have set 
\begin{equation}
\label{nuhat}
\widehat{\nu}_t:=\int_0^t e^{-s\A}\, \d \nu_s, \qquad \widehat{\nu}_{0^-}=0,
\end{equation}
and 
\begin{equation}
\label{Yhat}
\widehat{Y}_t^{\mathbf{y},\widehat{\nu}}:= \mathbf{y}+ \widehat{\nu}_t, \quad \widehat{Y}_{0^-}^{\mathbf{y},\widehat{\nu}}=\mathbf{y}.
\end{equation}
Notice that \eqref{mild2} is formally equivalent to the expression of the controlled dynamics that one would have in a one-dimensional setting where the process $Y$ is affected linearly by a monotone control and depreciates over time at a constant rate (see, e.g., p.\ 770 in \cite{BankRiedel1} or eq.\ (2.3) in \cite{CFS}).

Letting 
$$\widehat{\mathcal{C}}:=\Big\{\widehat{\nu}\in \mathcal{S}:\, \E\left[\int_0^T \langle e^{t\mathcal{A}^*}\Phi^*_t,\,\d \widehat{\nu}_t\rangle \right] < \infty\Big\},$$
we notice that the mapping $\mathcal{C}\to\widehat{\mathcal{C}}, \ \nu\mapsto \widehat{\nu}$, is one-to-one and onto. In particular, for any $\widehat{\nu} \in \widehat{\mathcal{C}}$ one has that $\nu_t:=\int_0^t e^{s\A}\, \d \widehat{\nu}_s \in \mathcal{C}$.
As a consequence, for any $\mathbf{y} \in K_+$, problem \eqref{OCproblem} reads
\begin{equation}
\label{OCP-group}
v(\mathbf{y}) = \sup_{\widehat{\nu} \in \widehat{\mathcal{C}}}\E\left[\int_0^T \Pi\left(t, e^{t\mathcal{A}} \widehat{Y}_t^{\mathbf{y},\widehat{\nu}} \right)q(\d t) -  \int_0^T \langle e^{t\mathcal{A}^*}\Phi^*_t,\,\d \widehat{\nu}_t\rangle \right].
\end{equation}

Theorem \ref{prop:FOC} can then be reformulated as follows.

\begin{corollary}
\label{cor:FOC-group} 
A control $\widehat{\nu}^{\star}\in\widehat{\mathcal{C}}$ is optimal for problem \eqref{OCP-group} {if and only if} the following First-Order Conditions (FOCs) hold true:
\begin{enumerate}[(i)]
\item For every $\widehat{\nu}\in \widehat{\mathcal{C}}$
$$\E\left[\int_0^T\Big\langle e^{s\mathcal{A}^*}\E\left[\int_s^Te^{(t-s)\A^*} \nabla \Pi\left(t,e^{t\A}\widehat{Y}_t^{\mathbf{y},\widehat{\nu}^{\star}}\right)q(\d t)\,\Big|\,  \mathcal{F}_s\right]-e^{s\mathcal{A}^*}\Phi^*_s,\d \widehat{\nu}_s\Big\rangle  \right]\leq 0;$$
\item the following equality holds:
 $$\E\left[\int_0^T\Big\langle e^{s\mathcal{A}^*}\E\left[\int_s^T e^{(t-s)\A^*} \nabla \Pi\left(t,e^{t\A}\widehat{Y}_t^{\mathbf{y},\widehat{\nu}^{\star}}\right)q(\d t)\,\Big|\,  \mathcal{F}_s\right]-e^{s\mathcal{A}^*}\Phi^*_s,\d \widehat{\nu}^{\star}_s\Big\rangle  \right] = 0.$$
\end{enumerate}
\end{corollary}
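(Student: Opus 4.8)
The plan is to read the statement off Theorem \ref{prop:FOC} via the change of variables $\nu\mapsto\widehat{\nu}$ introduced in \eqref{nuhat}. As recorded just before the statement, the map $\mathcal{C}\to\widehat{\mathcal{C}}$, $\nu\mapsto\widehat{\nu}$, is a bijection transforming \eqref{OCproblem} into \eqref{OCP-group}; hence $\widehat{\nu}^{\star}\in\widehat{\mathcal{C}}$ is optimal for \eqref{OCP-group} if and only if the associated $\nu^{\star}_t:=\int_0^t e^{s\A}\,\d\widehat{\nu}^{\star}_s$ is optimal for \eqref{OCproblem}. So it suffices to transcribe the two FOCs of Theorem \ref{prop:FOC}, written for $\nu^{\star}$ and tested against arbitrary $\nu\in\mathcal{C}$, into the hatted variables, and then to observe that quantifying over all $\nu\in\mathcal{C}$ is the same as quantifying over all $\widehat{\nu}\in\widehat{\mathcal{C}}$.

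The one genuinely new ingredient is a change-of-variables rule for the vector integral against $\d\nu$: for any measurable $K^*_+$-valued process $g^*$ and any $\nu\in\mathcal{C}$ with associated $\widehat{\nu}$,
$$\int_0^T \langle g^*_s,\,\d\nu_s\rangle = \int_0^T \langle e^{s\A^*} g^*_s,\,\d\widehat{\nu}_s\rangle.$$
I would prove this straight from the definitions. Writing $\d\widehat{\nu}=\widehat{\rho}\,\d|\widehat{\nu}|$ as in \eqref{DP}, the defining identity \eqref{def-int2} gives $\nu_t=\int_0^t e^{s\A}\,\d\widehat{\nu}_s=\int_{[0,t]} e^{s\A}\widehat{\rho}_s\,\d|\widehat{\nu}|_s$, so $\nu$ is the $K_+$-valued measure with density $e^{s\A}\widehat{\rho}_s$ against $\d|\widehat{\nu}|$; inserting this into \eqref{def-int1} and using $\langle g^*_s, e^{s\A}\widehat{\rho}_s\rangle=\langle e^{s\A^*}g^*_s, \widehat{\rho}_s\rangle$ yields the claim after integrating against $\d|\widehat{\nu}|$. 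The integrands in the FOCs are differences of two $K^*_+$-valued processes, namely the conditional expectation of $\int_s^T e^{(t-s)\A^*}\nabla\Pi(\cdot)\,q(\d t)$ (nonnegative since $\nabla\Pi\in K^*_+$ and $e^{(t-s)\A^*}$ preserves positivity) and $\Phi^*_s$; I would apply the rule to each nonnegative constituent separately and recombine, the manipulation being legitimate because the relevant integrals are finite by Lemma \ref{lemmafinito} and the definition of $\mathcal{C}$.

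With this in hand the conclusion is pure substitution. Using $Y_t^{\mathbf{y},\nu^{\star}}=e^{t\A}\widehat{Y}_t^{\mathbf{y},\widehat{\nu}^{\star}}$ from \eqref{mild2}, the integrand $g^*_s$ of Theorem \ref{prop:FOC} becomes the expression inside the corollary's brackets (before multiplication by $e^{s\A^*}$); applying the rule to the test control $\nu$ in part (i) and to $\nu^{\star}$ in part (ii) replaces $\langle g^*_s,\,\d\nu_s\rangle$ by $\langle e^{s\A^*}g^*_s,\,\d\widehat{\nu}_s\rangle$ and $\langle g^*_s,\,\d\nu^{\star}_s\rangle$ by $\langle e^{s\A^*}g^*_s,\,\d\widehat{\nu}^{\star}_s\rangle$, and distributing $e^{s\A^*}$ over the difference produces exactly conditions (i) and (ii). I expect the only delicate point to be the bookkeeping in the change-of-variables rule — verifying that the variations $|\nu|$, $|\widehat{\nu}|$ and the densities $\rho$, $\widehat{\rho}$ are related as claimed so that \eqref{def-int1} truly applies — while the remainder is a direct restatement of Theorem \ref{prop:FOC}.
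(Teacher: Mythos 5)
Your proposal is correct and follows exactly the route the paper intends: the paper gives no explicit proof of this corollary, simply asserting that Theorem \ref{prop:FOC} ``can be reformulated'' via the bijection $\nu\mapsto\widehat{\nu}$, and your change-of-variables identity $\int_0^T\langle g^*_s,\d\nu_s\rangle=\int_0^T\langle e^{s\A^*}g^*_s,\d\widehat{\nu}_s\rangle$ together with the substitution $Y^{\mathbf{y},\nu^\star}_t=e^{t\A}\widehat{Y}^{\mathbf{y},\widehat{\nu}^\star}_t$ is precisely the bookkeeping that assertion leaves implicit. If anything, your write-up supplies more detail than the paper does.
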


The previous discussion (cf.\ \eqref{mild2}, \eqref{nuhat}, \eqref{Yhat}) and Corollary \ref{cor:FOC-group} immediately yield the following.
\begin{proposition}
\label{lemm:optimal}
Suppose that $\widehat{\nu}^{\star}\in\widehat{\mathcal{C}}$ is an optimal control for problem \eqref{OCP-group}. Then, $\nu^{\star}_t:=\int_0^t e^{s\A}\, \d \widehat{\nu}^{\star}_s \in \mathcal{C}$ is an optimal control for problem \eqref{OCproblem} and $\big(e^{t\A}\widehat{Y}^{\mathbf{y},\widehat{\nu}^{\star}}_t\big)_{t \geq 0}$ is its associated optimally controlled state process.
\end{proposition}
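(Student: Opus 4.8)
The plan is to show that the change of variables $\nu\leftrightarrow\widehat{\nu}$ turns Problem \eqref{OCproblem} into Problem \eqref{OCP-group} while preserving the value of the objective, so that optimality can simply be transported along the bijection already recorded in the discussion preceding the statement. First I would recall that $\nu\mapsto\widehat{\nu}$, with $\widehat{\nu}_t=\int_0^t e^{-s\A}\,\d\nu_s$ as in \eqref{nuhat}, is a bijection from $\mathcal{C}$ onto $\widehat{\mathcal{C}}$ whose inverse sends $\widehat{\nu}$ to $\nu_t=\int_0^t e^{s\A}\,\d\widehat{\nu}_s$; in particular the candidate $\nu^{\star}$ lies in $\mathcal{C}$ by that same discussion.

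Next I would check that the construction is consistent, namely that $\widehat{\nu}^{\star}$ is recovered as the hat-transform of $\nu^{\star}$. Using that $\d\nu^{\star}=e^{\cdot\A}\,\d\widehat{\nu}^{\star}$ at the level of the vector measures of Section \ref{sec:setting}, together with the group identity $e^{-s\A}e^{s\A}=\mathrm{Id}$, one gets $\int_0^t e^{-s\A}\,\d\nu^{\star}_s=\int_0^t\d\widehat{\nu}^{\star}_s=\widehat{\nu}^{\star}_t$, so the hat-transform of $\nu^{\star}$ equals $\widehat{\nu}^{\star}$. Then \eqref{mild2} immediately identifies the associated state process as $Y_t^{\mathbf{y},\nu^{\star}}=e^{t\A}\widehat{Y}_t^{\mathbf{y},\widehat{\nu}^{\star}}$.

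The core step is the identity $\mathcal{J}(\mathbf{y},\nu)=\E\big[\int_0^T\Pi(t,e^{t\A}\widehat{Y}_t^{\mathbf{y},\widehat{\nu}})\,q(\d t)-\int_0^T\langle e^{t\A^*}\Phi^*_t,\,\d\widehat{\nu}_t\rangle\big]$ for every $\nu\in\mathcal{C}$ with associated $\widehat{\nu}$. The running-profit term is immediate from \eqref{mild2}, which gives $\Pi(t,Y_t^{\mathbf{y},\nu})=\Pi(t,e^{t\A}\widehat{Y}_t^{\mathbf{y},\widehat{\nu}})$. For the cost term I would establish $\int_0^T\langle\Phi^*_t,\d\nu_t\rangle=\int_0^T\langle e^{t\A^*}\Phi^*_t,\d\widehat{\nu}_t\rangle$ by writing $\d\nu=e^{\cdot\A}\,\d\widehat{\nu}$ through the Dunford--Pettis representation \eqref{DP} and the definition \eqref{def-int2}, and then moving the semigroup onto the dual variable via the adjoint relation $\langle\Phi^*_t,e^{t\A}v\rangle=\langle e^{t\A^*}\Phi^*_t,v\rangle$; this is precisely the computation through which \eqref{OCP-group} was obtained from \eqref{OCproblem}.

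Finally I would conclude by transport of optimality: since the displayed identity makes the two objectives coincide under the value-preserving bijection, for every $\nu\in\mathcal{C}$ one has $\mathcal{J}(\mathbf{y},\nu)\le\mathcal{J}(\mathbf{y},\nu^{\star})$, because $\widehat{\nu}^{\star}$ maximizes the right-hand side of \eqref{OCP-group} over $\widehat{\mathcal{C}}$ and $\widehat{\nu}$ ranges over all of $\widehat{\mathcal{C}}$ as $\nu$ ranges over $\mathcal{C}$. Hence $\nu^{\star}$ is optimal for \eqref{OCproblem} with optimal state $e^{t\A}\widehat{Y}_t^{\mathbf{y},\widehat{\nu}^{\star}}$. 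I expect the only real obstacle to be the rigorous justification of the cost change of variables, i.e.\ verifying that the manipulation $\d\nu=e^{\cdot\A}\,\d\widehat{\nu}$ and the transfer of the semigroup to the adjoint are licit at the level of the pathwise Bochner integrals of Section \ref{sec:setting}; everything else is a direct bookkeeping consequence of the group structure and of results already in hand.
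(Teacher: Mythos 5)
Your proposal is correct and is essentially the paper's own argument: the paper gives no explicit proof, stating only that the result follows ``immediately'' from the preceding discussion (the bijection $\mathcal{C}\to\widehat{\mathcal{C}}$, \eqref{mild2}, \eqref{nuhat}, \eqref{Yhat}, and the rewriting \eqref{OCP-group} of the value function) together with Corollary \ref{cor:FOC-group}, and your write-up is precisely the fleshed-out version of that transport-of-optimality argument. The only cosmetic difference is that you bypass Corollary \ref{cor:FOC-group} entirely and argue directly from the value-preserving bijection, which is legitimate since the identity $v(\mathbf{y})=\sup_{\widehat{\nu}\in\widehat{\mathcal{C}}}(\cdots)$ already carries all the content.
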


\begin{remark}
\label{rem:existence}
We have obtained necessary and sufficient conditions for optimality for the {\rm{concave}} problem of maximization of an expected net profit functional. Through the same arguments employed above, a similar characterization of the optimal control can be obtained  for the {\rm{convex}} problem of minimization of a total expected cost functional of the form
$$\E\left[\int_0^T C\left(t, Y_t^{\mathbf{y},\nu}\right) q(\d t) +  \int_0^T \langle \Phi^*_t,\,\d \nu_t\rangle \right].$$
Here, $C:\Omega \times [0,T] \times K_+ \to \mathbb{R}_+$, $(\omega,t,\mathbf{k}) \mapsto C(\omega,t,\mathbf{k})$ is convex with respect to $\mathbf{k}$, and satisfies suitable additional technical requirements.

Within such a setting, suppose that $X=L^2(D)$ and identify, through the usual Riesz representation, $X^*=X$. Assume that $\mathcal{A}$ generates a $C_0$-group of {positivity-preserving operators} and that $\langle e^{t\A} \mathbf{k}, e^{t\A} \mathbf{k} \rangle \geq m_t \langle \mathbf{k}, \mathbf{k} \rangle$ for a suitable strictly positive function $m$ and for all $\mathbf{k} \in K_+$ {(the latter condition is verified, e.g., with $m\equiv 1$ if $\A$ is a skew-adjoint operator)}. Then, it is possible to show that, taking $C(\omega,t,\mathbf{k}) := \langle \mathbf{k} - Z_t(\omega), \mathbf{k} - Z_t(\omega) \rangle$, for a suitably integrable $X$-valued $\mathbb{F}$-adapted stochastic process $Z$, there exists an optimal control for the cost minimization problem. This is due to the fact that the previous specifications of the problem's data allow to prove that any minimizing sequence is uniformly bounded in $L^2(\Omega \times [0,T]; L^2(X))$, so that standard arguments may be used to show the existence of an optimizer.

A similar strategy seems not to be feasible if one aims at proving existence of an optimizer in the current studied case of the maximization of a net profit functional, where typically the running profit function grows at most linearly. However, in the next section, under suitable requirements on the problem's data and the standing assumption of this section that $\mathcal{A}$ generates a $C_0$-group of {positivity-preserving operators}, we provide the explicit expression of an optimal control.
\end{remark}

%%%%%%%%%%%%%%%%%%%%%%%%%%%%%%%%%%%%%%%%%

\subsection{Explicit solution in a separable setting}
\label{sec:explicit}

We now provide an explicit solution to problem \eqref{OCproblem} in a specific separable context. The following study is motivated by a problem of irreversible investment as outlined in the introduction.

Consider a globally operating company that can invest irreversibly into capacity of local sub-companies at locations $x\in D$. We assume that $D$ is equipped with a suitable finite measure $\mu$. At each location, the same product is produced and sold at the global market for a stochastic, time-varying price. If the company operates at decreasing returns to scale, total  profit at time $t$ when capacity at location $x$ is $Y_t(x)$ can be written in the form
$$ (z^*_t)^\alpha  \int_D \left(Y_t(x)\right)^{1-\alpha} \mu(\d x), \qquad \alpha \in (0,1),$$ 
where the stochastic process $z^*$ is derived from the (global, stochastic) output price and wages. We also assume that the cost of investment into capacity does not depend on the specific location $x$; think, again, of a globally traded input like labor, technology etc.\ that has a globally uniform price $\varphi^*_t$. The operator $\A$ describes the impact of a firm on its neighbors; these could be spillover effects of investments, demographic changes, labor mobility etc.
 
This irreversible investment problem falls into the following class of problems.
\begin{assumption}
\label{ass:group}
\begin{itemize}\hspace{10cm}
\item[(i)] $\mu(D)<\infty$, $T=+\infty$, and $q(\d t)= e^{-rt}dt$, for some $r>0$. 
\item[(ii)] The unitary vector $\mathbf{1}$ is an eigenvector of $\mathcal{A}$ and $\mathcal{A}^*$ with associated eigenvalues $\lambda_0 \in \mathbb{R}$ and $\lambda_0^* \in \mathbb{R}$, respectively.
\item[(iii)] $r>\lambda_0^* \vee 0$.
\item[(iv)] $\Pi(\omega,t,\mathbf{k})= (1-\alpha)^{-1}(z_t^*)^{\alpha}(\omega)\langle \mathbf{1},\mathbf{k}^{1-\alpha}(\cdot)\rangle$, $(\omega,t,\mathbf{k})\in \Omega \times \mathbb{R}_+ \times K_+$, for some $\alpha \in (0,1)$ and for an $\mathbb{F}$-progressively measurable nonnegative process $(z_t^*)_{t\geq0}$.
\item[(v)] $\Phi^*_t(\omega)= e^{-rt}\varphi^*_t(\omega)\mathbf{1}$, for all $(\omega,t)\in \Omega \times \mathbb{R}_+$, for an $\mathbb{F}$-progressively measurable, nonnegative, c\`adl\`ag  process $(\varphi^*_t)_{t\geq0}$ such that $(e^{-(r-\lambda_0^*)t} \varphi^*_{t})_{t\geq0}$ is of Class (D), lower-semicontinuous in expectation, and $\limsup_{t \uparrow \infty}e^{-(r-\lambda_0^*)t} \varphi^*_{t}=0$. 
\end{itemize}
\end{assumption}

Notice that although the process $\Phi^*$ is space-homogeneous, the problem is still space-inhomogeneous, since the initial distribution of the production capacity $\mathbf{y}$ does not need to be uniform. Under Assumption \ref{ass:group}, it holds
\begin{equation}
\label{example-1}
e^{t\A}\mathbf{1} = e^{\lambda_0 t}\mathbf{1} \quad \text{and} \quad e^{t\A^*} \mathbf{1}=e^{\lambda_0^* t}\mathbf{1}, \quad t \geq0,
\end{equation}
and we have
\begin{equation}
\label{example-gradient}
\nabla \Pi(\omega,t,\mathbf{k}) = (z^*_t(\omega))^{\alpha}\mathbf{k}(\cdot)^{-\alpha}, \quad (\omega,t,\mathbf{k})\in \Omega \times \mathbb{R}_+ \times K_+.
\end{equation}

\begin{remark}
\label{rem:operators}
Operators $\mathcal{A}$ and spaces $(D,\mu)$ satisfying Assumptions \ref{ass:A}, \ref{ass:group}-(i), and \ref{ass:group}-(ii) are, for instance:
\begin{itemize}
\item[(a)]
the space $D=S^1$ with the Hausdorff measure, where $S^1$ is the unit circle in $\R^2$, and the spatial-derivative operator $\mathcal{A}:=\frac{\d}{\d x}$ {with domain $W^{1,2}(S^1)$} on the space $X=L^2(S^1)$. In this case $\lambda_0=\lambda_0^* =0$; 
 \item[(b)]
any finite measure space $D$ and the integral operator
$$(\mathcal{A}f)(x):=\int_{D} a(x,y)f(y)\mu(\d y), \ \ \ \ f\in X:=L^2(D),$$ for a kernel $a \in L^2(D\times D)$ {with $a \geq 0$}, such that 
$$
\int_D a(x,y)\mu(\d y)=c_1 \ \ \ \forall x\in D; \ \ \ \  \int_D a(x,y)\mu(\d x)=c_2 \ \ \ \forall y\in D.
$$
In this case $\lambda_0=c_1$ and $\lambda_0^* =c_2$. Note that $\mathcal{A} \in \mathcal{L}^+(L^2(D))$,  thus 
$$ e^{t\mathcal{A}}f = \sum_{n=0}^{\infty} \frac{t}{n!}\mathcal{A}^n f.$$
Hence,  $(e^{t\A})_{t\geq 0}\subseteq \mathcal{L}^+(L^2(D))$.
\end{itemize}
\end{remark}

Before moving on with our analysis we need the following result. Its proof follows from a suitable application of the Bank-El Karoui's Representation Theorem (cf.\ Theorem 3 in \cite{BankElKaroui}).
\begin{lemma}
\label{lem:existenceBEK}
There exists a unique (up to indistinguishability) strictly positive optional solution $\ell=(\ell_t)_{t\geq 0}$ to 
\begin{equation}
\label{backwardeq-1dim}
\E\bigg[\int_{\tau}^{\infty} e^{-(r-\lambda_0^*)t} (z_t^*)^{\alpha}\,\Big(e^{\lambda_0 t}\sup_{\tau \leq u \leq t}e^{-\lambda_0 u}\ell_u\Big)^{-\alpha} \d t \bigg| \mathcal{F}_{\tau} \bigg] = e^{-(r-\lambda_0^*)\tau}\varphi^*_{\tau},
\end{equation}
for any $\mathbb{F}$-stopping time $\tau$.\footnote{We adopt the convention $e^{-(r-\lambda_0^*)\tau}\varphi^*_{\tau}:=\limsup_{t \uparrow \infty}e^{-(r-\lambda_0^*)t}\varphi^*_{t}$ on the event $\{\tau=+\infty\}$.}

Moreover, the process $\ell$ has upper right-continuous paths, and it is such that 
\begin{equation}
\label{eq:integrabilityBEK}
e^{-(r-\lambda_0^*)t} (z^*_t)^{\alpha} \Big(e^{\lambda_0 t}\sup_{s \leq u \leq t}e^{-\lambda_0 u}\ell_u \Big)^{-\alpha} \in L^1(\P \otimes \d t),
\end{equation}
for any $s \geq 0$.
\end{lemma}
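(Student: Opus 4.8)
The plan is to recast the backward equation \eqref{backwardeq-1dim} as an instance of the stochastic representation problem solved by Bank and El Karoui (Theorem 3 in \cite{BankElKaroui}), and to read off existence, uniqueness, path-regularity, and the integrability \eqref{eq:integrabilityBEK} directly from their result. To this end I would introduce the optional process $\xi_u:=e^{-\lambda_0 u}\ell_u$, which is strictly positive precisely when $\ell$ is, so that the running supremum in \eqref{backwardeq-1dim} becomes $\sup_{\tau\le u\le t}e^{-\lambda_0 u}\ell_u=\sup_{\tau\le u\le t}\xi_u$ and the integrand equals $f(t,\sup_{\tau\le u\le t}\xi_u)$ with
$$ f(\omega,t,x):=e^{-(r-\lambda_0^*)t}\,(z_t^*(\omega))^{\alpha}\,e^{-\alpha\lambda_0 t}\,x^{-\alpha}, \qquad x>0 . $$
Setting $X_\tau:=e^{-(r-\lambda_0^*)\tau}\varphi^*_\tau$ and taking the underlying random measure to be Lebesgue measure $\d t$ on $[0,\infty)$, equation \eqref{backwardeq-1dim} reads $X_\tau=\E[\int_\tau^\infty f(t,\sup_{\tau\le u\le t}\xi_u)\,\d t\,|\,\mathcal F_\tau]$ for every stopping time $\tau$, which is exactly the representation problem for the pair $(X,f)$.

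Second, I would verify the hypotheses of \cite{BankElKaroui} for this pair. Since $\alpha\in(0,1)$, for each $(\omega,t)$ the map $x\mapsto f(\omega,t,x)$ is continuous and strictly decreasing from $+\infty$ (as $x\downarrow 0$) to $0$ (as $x\uparrow\infty$); its joint measurability follows from the progressive measurability of $z^*$ (Assumption \ref{ass:group}-(iv)). The target $X$ is optional and, by Assumption \ref{ass:group}-(v), of Class (D), lower-semicontinuous in expectation, and satisfies $\limsup_{t\uparrow\infty}X_t=0$; together with the stated convention on $\{\tau=+\infty\}$, this plays the role of the terminal datum in the infinite-horizon version of the theorem. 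The integrability of $t\mapsto f(t,c)$ on $[0,\infty)$ for fixed levels $c>0$, needed for the conditional expectations to be finite, follows from Assumption \ref{ass:group}-(iii)--(iv) and the Class (D) property of $X$.

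Third, the representation theorem then produces a unique (up to indistinguishability) optional, upper right-continuous process $\xi$ representing $X$, together with the integrability $f(t,\sup_{s\le u\le t}\xi_u)\in L^1(\P\otimes\d t)$ for each $s\ge0$. Undoing the substitution, $\ell_u:=e^{\lambda_0 u}\xi_u$ is optional, upper right-continuous (the factor $e^{\lambda_0 u}$ is continuous), strictly positive (because $f$ takes values in $(0,\infty)$, so its level-inverse is positive), and solves \eqref{backwardeq-1dim}; moreover \eqref{eq:integrabilityBEK} is precisely the transported integrability statement. Uniqueness of $\ell$ transfers from that of $\xi$, since $u\mapsto e^{\lambda_0 u}$ makes $\ell\leftrightarrow\xi$ a pathwise bijection.

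The main obstacle I anticipate is matching the exact hypotheses of \cite{BankElKaroui} rather than the algebra. Two points are delicate: first, the function $f$ here maps into $(0,\infty)$ rather than strictly decreasing from $+\infty$ to $-\infty$ as in the canonical statement, so one must either invoke the variant adapted to positive levels or extend $f$ to all of $\R$ while checking that the representing process stays in the region $(0,\infty)$ on which the extension agrees with $f$ (this is also what secures strict positivity and finiteness of $\ell$); second, the infinite-horizon setting requires that the Class (D), lower-semicontinuity-in-expectation, and $\limsup$-at-infinity conditions on $e^{-(r-\lambda_0^*)t}\varphi^*_t$ genuinely substitute for a finite-horizon terminal condition, and that the closed running supremum $\sup_{\tau\le u\le t}$ be reconciled with the half-open supremum used in \cite{BankElKaroui} through the upper right-continuity of $\xi$.
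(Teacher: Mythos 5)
Your proposal follows essentially the same route as the paper: identify $X_t=e^{-(r-\lambda_0^*)t}\varphi^*_t$, recast \eqref{backwardeq-1dim} as a Bank--El Karoui representation problem, and read off existence, uniqueness, upper right-continuity, and \eqref{eq:integrabilityBEK} from Theorem 3 of \cite{BankElKaroui}, deferring (as the paper also does, via Proposition 3.4 of \cite{Ferrari15}) the verification that the representing process stays in the admissible region and is upper right-continuous.

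One concrete wrinkle in your second anticipated obstacle: with your substitution $\xi_u=e^{-\lambda_0 u}\ell_u$, the resulting $f(\omega,t,x)=e^{-(r-\lambda_0^*)t}(z_t^*)^{\alpha}e^{-\alpha\lambda_0 t}x^{-\alpha}$ maps $(0,\infty)$ \emph{onto} $(0,\infty)$, so there is literally no strictly decreasing extension to all of $\mathbb{R}$ with range $\mathbb{R}$ that agrees with $f$ on the whole of $(0,\infty)$: the value $+\infty$ is already exhausted at $x\downarrow 0^+$ and the infimum over the original domain is $0$, leaving no room to reach $(-\infty,0]$. The paper's device \eqref{identificationf} is precisely the needed fix: one first precomposes with the increasing bijection $\ell\mapsto -e^{\lambda_0 t}/\ell$ from $(0,\infty)$ onto $(-\infty,0)$, which commutes with running suprema, so that the transformed $f$ decreases from $+\infty$ to $0$ on $(-\infty,0)$, and then one glues a linear decreasing piece on $[0,\infty)$ to reach $-\infty$; strict positivity of $\ell$ then corresponds to strict negativity of the representing process $\xi$, which is what must be checked a posteriori. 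So your plan is correct in spirit, but the ``extend $f$'' branch must be replaced by this reparametrization of the level variable; otherwise the hypotheses of \cite{BankElKaroui} cannot be met.
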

\begin{proof}
Apply the Bank-El Karoui's Representation Theorem (cf.\ \cite{BankElKaroui}, Theorem 3) to (according to the notation of that paper)
\begin{equation}
\label{identification}
X_t(\omega):= e^{-(r-\lambda_0^*)t} \varphi^*_{t},
\end{equation}
and
\begin{equation}
\label{identificationf}
f(\omega,t,\ell):=
\left\{
\begin{array}{ll}
e^{-(r-\lambda_0^*)t} (z^*_t(\omega))^{\alpha} \left(\frac{e^{\lambda_o t}}{-\ell}\right)^{-\alpha},\,\,\,\,\,\mbox{for}\,\,\ell<0,\\ \\
- e^{-(r-\lambda_0^*)t} \ell \,,\,\,\,\,\,\,\,\,\,\,\,\,\,\,\,\,\,\,\,\,\,\,\,\,\,\,\,\,\,\,\,\,\,\,\,\,\,\,\,\,\,\,\,\,\,\mbox{for}\,\,\ell\geq 0.
\end{array}
\right.
\end{equation}
Indeed, defining
\begin{equation}
\label{definizionexil}
\Xi^{\ell}_t:= \essinf_{\tau \geq 0}\ \E\bigg[\int_t^{\tau} f(s,\ell) \d s + X_{\tau}\,\Big|\,\mathcal{F}_t\bigg],\qquad \ell \in \mathbb{R},\,\,\,t \geq 0,
\end{equation}
the optional process (cf.\ \cite{BankElKaroui}, eq.\ (23) and Lemma 4.13)
\begin{equation}
\label{defxi1}
\xi_t := \sup\big\{\ell \in \mathbb{R}:\,\,\Xi^{\ell}_t = X_t\big\},\qquad t\geq 0,
\end{equation}
solves the representation problem
\begin{equation}
\label{representationproblem0}
\E\bigg[\,\int_{\tau}^{T} f(s,\sup_{\tau \leq u \leq s} \xi_u)\,\d s\,\Big|\,\mathcal{F}_{\tau}\,\bigg] = X_{\tau},
\end{equation}
for any $\mathbb{F}$-stopping time $\tau$.

If now $\xi$ has upper right-continuous paths and it is strictly negative, then the strictly positive, upper right-continuous process $\ell_t = - \frac{e^{-\lambda_o t}}{\xi_t}$ solves
\begin{eqnarray*}
\label{representationproblem2}
e^{-(r-\lambda_0^*)\tau}\varphi^*_{\tau} &\hspace{-0.25cm} = \hspace{-0.25cm} &\E\bigg[\,\int_{\tau}^{\infty} e^{-(r-\lambda_0^*)t} (z^*_t)^{\alpha}\left(\frac{e^{\lambda_o t}}{-\sup_{\tau \leq u \leq t}( - \frac{e^{\lambda_o u}}{\ell_{u}})}\right)^{-\alpha}\,\d t\,\Big|\,\mathcal{F}_{\tau}\,\bigg]\nonumber \\
&\hspace{-0.25cm} = \hspace{-0.25cm}&\E\bigg[\,\int_{\tau}^{\infty} e^{-(r-\lambda_0^*)t} (z^*_t)^{\alpha}\big(e^{\lambda_o t} \sup_{\tau \leq u \leq t} e^{-\lambda_o u}\ell_{u}\big)^{-\alpha}\,\d t\,\Big|\,\mathcal{F}_{\tau}\,\bigg], \nonumber
\end{eqnarray*}
for any $\mathbb{F}$-stopping time $\tau$; i.e.\ $\ell$ solves \eqref{backwardeq-1dim}, thanks to \eqref{identificationf} and \eqref{representationproblem0}. Moreover, $\xi$ (and hence $\ell$) is unique up to optional sections by \cite{BankElKaroui}, Theorem $1$, as it is optional and upper right-continuous. Therefore it is unique up to indistinguishability by Meyer's optional section theorem (see, e.g., \cite{DM}, Theorem IV.86).

To complete the proof, we must show that $\xi$ is indeed upper right-continuous and strictly negative. This can be done by following the arguments employed at the end of the proof of Proposition 3.4 of \cite{Ferrari15}.
\end{proof}

Notice that equation \eqref{backwardeq-1dim} might be explicitly solved when the processes $z^*$ and $\varphi^*$ are specified. If, for example, $z^*$ and $\varphi^*$ are exponential L\'evy processes, then so it is the ratio $\eta^*:= (z^*)^{\alpha}/\varphi^*$. In this case, \eqref{backwardeq-1dim} is equivalent to the backward equation solved by Riedel and Su in Proposition 7 of \cite{RiedelSu} once we set, in their notation, $\eta^*:=X$ and $\lambda_0^*:=-\delta$. Within this specification, $\ell_{t}=\kappa \eta^*_{t}$, for a suitable constant $\kappa>0$ that can be explicitly determined with the help of the Wiener-Hopf factorization. We also refer to \cite{FerrariSalminen} for related results in a L\'evy setting, and to \cite{Ferrari15} for explicit solutions when the underlying randomness is driven by a one-dimensional regular diffusion. 

We now provide a possible sufficient condition on the processes $z^*$ and $\varphi^*$ ensuring that the performance criterion $\mathcal{J}(\mathbf{y}, \nu)<\infty$ is finite for any admissible control $\nu$ (cf.\ Assumption \ref{ass:U}-(ii)).
\begin{lemma}
\label{lem:finitefunctional}
Suppose that
\begin{equation}
\label{eq:integr}
\E\bigg[\int_0^{\infty} e^{-(r \wedge (r-\lambda_0^*))t} (z^*_t)^{\alpha} dt \bigg] < \infty,
\end{equation}
and that there exists $m>0$ such that
\begin{equation}
\label{cond:finiteJ}
\E\bigg[\int_s^{\infty} e^{-(r-\lambda_0^*)t} (z^*_t)^{\alpha} dt \,\Big|\, \mathcal{F}_s\bigg] \leq m e^{-(r-\lambda_0^*)s}\varphi^*_s,
\end{equation}
for all $s\geq0$.
Then, there exists $C>0$, independent of $\mathbf{y}$, such that $v(\mathbf{y}) \leq C\big(1 + \langle \mathbf{1}, \mathbf{y} \rangle\big)$. 
\end{lemma}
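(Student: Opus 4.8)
The plan is to produce an upper bound of the form $\mathcal{J}(\mathbf{y},\nu)\leq C_1+C_2\langle\mathbf{1},\mathbf{y}\rangle$ that is \emph{uniform} in $\nu\in\mathcal{C}$, and then pass to the supremum. The device is a space-homogeneous reference point $\mathbf{c}:=c\,\mathbf{1}$, with a constant $c>0$ to be fixed, against which I linearise $\Pi$ using concavity. The two structural features that make this work are the positive homogeneity of the power profit and the one-dimensional scalar identities \eqref{example-1} that $\mathbf{1}$ is an eigenvector of $e^{t\A}$ and $e^{t\A^*}$.

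First I would record two elementary facts under Assumption \ref{ass:group}-(iv). Since $\mathbf{k}\mapsto\Pi(\omega,t,\mathbf{k})$ is positively homogeneous of degree $1-\alpha$, Euler's identity gives $\langle\nabla\Pi(t,\mathbf{k}),\mathbf{k}\rangle=(1-\alpha)\Pi(t,\mathbf{k})$ (checked directly from $\nabla\Pi(t,\mathbf{k})=(z^*_t)^\alpha\mathbf{k}^{-\alpha}$). Combining this with the supporting-hyperplane inequality for the concave map $\Pi(t,\cdot)$ at the interior point $c\mathbf{1}\in\mathrm{ri}(K_+)$ yields the pointwise estimate
$$\Pi(t,Y_t^{\mathbf{y},\nu})\leq \alpha\,\Pi(t,c\mathbf{1})+\big\langle\nabla\Pi(t,c\mathbf{1}),\,Y_t^{\mathbf{y},\nu}\big\rangle,$$
in which every term is nonnegative, because $\nabla\Pi(t,c\mathbf{1})\in K_+^*$ and $Y_t^{\mathbf{y},\nu}\in K_+$. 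I would then integrate against $q(\d t)=e^{-rt}\d t$ and take expectations; working throughout with nonnegative integrands legitimises Tonelli and, crucially, avoids presupposing $\mathcal{J}(\mathbf{y},\nu)<\infty$. Splitting $Y_t^{\mathbf{y},\nu}=e^{t\A}\mathbf{y}+\int_0^t e^{(t-s)\A}\d\nu_s$ separates a $\nu$-free part from a controlled part.

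The heart of the argument is the controlled part. Applying Lemma \ref{lemma:fubini} with $\Theta(t,s)=e^{(t-s)\A}$ and $f_t^*=\nabla\Pi(t,c\mathbf{1})$, followed by the optional-projection identity \eqref{DM-opt}, gives
$$\E\bigg[\int_0^\infty\!\Big\langle\nabla\Pi(t,c\mathbf{1}),\int_0^t e^{(t-s)\A}\d\nu_s\Big\rangle e^{-rt}\d t\bigg]=\E\bigg[\int_0^\infty\!\big\langle\E[\psi^*_s\,|\,\mathcal{F}_s],\,\d\nu_s\big\rangle\bigg],$$
where $\psi^*_s:=\int_s^\infty e^{(t-s)\A^*}\nabla\Pi(t,c\mathbf{1})e^{-rt}\d t$. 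Using \eqref{example-1} and $\nabla\Pi(t,c\mathbf{1})=(z^*_t)^\alpha c^{-\alpha}\mathbf{1}$ one finds $\E[\psi^*_s|\mathcal{F}_s]=c^{-\alpha}e^{-\lambda_0^* s}\,\E\!\big[\int_s^\infty e^{-(r-\lambda_0^*)t}(z^*_t)^\alpha\d t\,\big|\,\mathcal{F}_s\big]\,\mathbf{1}$. I now fix $c:=m^{1/\alpha}$: condition \eqref{cond:finiteJ} then forces $\E[\psi^*_s|\mathcal{F}_s]\leq e^{-rs}\varphi^*_s\,\mathbf{1}=\Phi^*_s$, so the controlled part is dominated by $\E[\int_0^\infty\langle\Phi^*_s,\d\nu_s\rangle]$ — precisely the cost term subtracted in $\mathcal{J}$. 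This exact cancellation is what renders the bound uniform over $\nu$.

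It remains to estimate the two $\nu$-free terms. The first, $\alpha\,\E[\int_0^\infty\Pi(t,c\mathbf{1})e^{-rt}\d t]=\frac{\alpha c^{1-\alpha}\mu(D)}{1-\alpha}\,\E[\int_0^\infty e^{-rt}(z^*_t)^\alpha\d t]$, uses $\langle\mathbf{1},\mathbf{1}\rangle=\mu(D)<\infty$; the second, via $\langle\mathbf{1},e^{t\A}\mathbf{y}\rangle=\langle e^{t\A^*}\mathbf{1},\mathbf{y}\rangle=e^{\lambda_0^* t}\langle\mathbf{1},\mathbf{y}\rangle$, equals $c^{-\alpha}\langle\mathbf{1},\mathbf{y}\rangle\,\E[\int_0^\infty e^{-(r-\lambda_0^*)t}(z^*_t)^\alpha\d t]$. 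Both prefactors are finite by \eqref{eq:integr}, since $e^{-rt}$ and $e^{-(r-\lambda_0^*)t}$ are each bounded by $e^{-(r\wedge(r-\lambda_0^*))t}$. Calling these constants $C_1$ and $C_2$ (both independent of $\mathbf{y}$) yields $\mathcal{J}(\mathbf{y},\nu)\leq C_1+C_2\langle\mathbf{1},\mathbf{y}\rangle$ for every $\nu\in\mathcal{C}$, hence $v(\mathbf{y})\leq C(1+\langle\mathbf{1},\mathbf{y}\rangle)$ with $C:=\max\{C_1,C_2\}$. I expect the main obstacle to be bookkeeping rather than conceptual: one must verify that $\psi^*$ is a genuine $K_+^*$-valued measurable process (its a.s.\ finiteness again follows from \eqref{eq:integr}) so that Lemma \ref{lemma:fubini} and \eqref{DM-opt} legitimately apply. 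The one genuinely clever ingredient is the pairing of the homogeneous reference $c\mathbf{1}$ with Euler's identity, which recasts the concavity estimate into a form whose control-dependent part is annihilated exactly by \eqref{cond:finiteJ}.
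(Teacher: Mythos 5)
Your proposal is correct and follows essentially the same route as the paper: linearize the concave power profit by a supporting-line (Young-type) inequality, split off the $\mathbf{y}$-part, convert the control-dependent linear term via Lemma \ref{lemma:fubini} and \eqref{DM-opt} into an integral against $\d\nu$, and use \eqref{cond:finiteJ} to dominate it by the cost term $\E[\int_0^\infty\langle\Phi^*_s,\d\nu_s\rangle]$, which is finite for $\nu\in\mathcal{C}$. The only differences are cosmetic: the paper works in the $\widehat{\nu}$-coordinates of \eqref{mild2} and keeps a free parameter $\varepsilon<1/m$ so the coefficient of the control term becomes $\varepsilon m-1\le 0$, whereas you work with $\nu$ directly and tune the reference point $c=m^{1/\alpha}$ to make the cancellation exact.
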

\begin{proof}
Recall that any $\nu \in \mathcal{C}$ defines $\widehat{\nu}\in \widehat{\mathcal{C}}$ through \eqref{nuhat}. Since $\alpha\in (0,1)$, for any $\varepsilon >0$ there exists $\kappa_{\varepsilon}>0$ such that, for all $\nu \in \mathcal{C}$ and $t\geq0$, one has 
\begin{eqnarray*}
& \displaystyle \big \langle \mathbf{1}, \big(e^{t\A}(\mathbf{y} + \widehat{\nu}_t)\big)^{1-\alpha} \big \rangle \leq \kappa_{\varepsilon} \langle \mathbf{1},\mathbf{1} \rangle + \varepsilon \big \langle \mathbf{1}, e^{t\A}\big(\mathbf{y} + \widehat{\nu}_t\big)\big \rangle = \kappa_{\varepsilon} \mu(D) + \varepsilon e^{\lambda_0^* t} \langle \mathbf{1} , \mathbf{y} \rangle + \varepsilon e^{\lambda_0^* t} \langle \mathbf{1}, \widehat{\nu}_t \rangle.
\end{eqnarray*}
Then, by the latter estimate, we have
\begin{eqnarray}
\label{eq:finiteJ}
&& \hspace{0.5cm} \mathcal{J}(\mathbf{y}, \nu) = \E\bigg[\int_0^{\infty} e^{-rt} (1-\alpha)^{-1}(z^*_t)^{\alpha} \big \langle \mathbf{1}, \big(e^{t\A}(\mathbf{y} + \widehat{\nu}_t)\big)^{1-\alpha} \big \rangle \d t- \int_0^{\infty} \langle e^{t\A^*}\Phi^*_t, \d \widehat{\nu}_t \rangle\bigg]  \\
&& \leq \kappa_{\varepsilon}C_1 + \varepsilon C_2(\mathbf{y}) + \varepsilon \E\bigg[\int_0^{\infty} e^{-(r-\lambda_0^*)t} (z^*_t)^{\alpha} \Big \langle \mathbf{1}, \int_0^t \d \widehat{\nu}_s \Big \rangle \d t\bigg] - \E\bigg[\int_0^{\infty} e^{-(r-\lambda_0^*)s} \varphi^*_s \langle \mathbf{1}, \d \widehat{\nu}_s \rangle\bigg],\nonumber
\end{eqnarray}
where we have set
$$C_1:= \mu(D) \E\bigg[\int_0^{\infty} e^{-rt} (z^*_t)^{\alpha} \d t\bigg] \quad \text{and} \quad C_2(\mathbf{y}):= \langle \mathbf{1}, \mathbf{y} \rangle \E\bigg[\int_0^{\infty} e^{-(r-\lambda_0^*)t} (z^*_t)^{\alpha} \d t\bigg].$$
Notice that $C_1$ and $C_2(\mathbf{y})$ are finite due \eqref{eq:integr}.
By employing now Lemma \ref{lemma:fubini} and \eqref{DM-opt} in the first expectation in the last line of \eqref{eq:finiteJ} we then obtain for any $\varepsilon>0$
\begin{eqnarray*}
\label{eq:finiteJ-2}
&& \mathcal{J}(\mathbf{y}, \nu) \leq \kappa_{\varepsilon}C_1 + \varepsilon C_2(\mathbf{y}) + \E\bigg[\int_0^{\infty} \bigg( \varepsilon \E\bigg[\int_s^{\infty} e^{-(r-\lambda_0^*)t} (z^*_t)^{\alpha} \d t \Big| \mathcal{F}_s \bigg] - e^{-(r-\lambda_0^*)s} \varphi^*_s\bigg) \langle \mathbf{1}, \d \widehat{\nu}_s \rangle\bigg] \nonumber \\
&& \leq \kappa_{\varepsilon}C_1 + \varepsilon C_2(\mathbf{y}) + \E\bigg[\int_0^{\infty} e^{-(r-\lambda_0^*)s} \varphi^*_s \big( \varepsilon m - 1 \big) \langle \mathbf{1}, \d \widehat{\nu}_s \rangle\bigg], 
\end{eqnarray*}
where \eqref{cond:finiteJ} has been used in the last step. The thesis finally follows by taking $\varepsilon < \frac{1}{m}$, and then the supremum over $\nu\in\mathcal{C}$.
\end{proof}

\begin{remark}
Notice that \eqref{cond:finiteJ} is satisfied, e.g., if $z^*$ and $\varphi^*$ are exponential L\'evy processes,  $\E\big[\int_0^{\infty}e^{-(r-\lambda_0^*)t} (z^*_t)^{\alpha} \d t \big] < \infty$, and there exists $m>0$ such that 
$$ (z^*_s)^{\alpha}\cdot  \E\left[\int_0^{\infty}e^{-(r-\lambda_0^*)t} (z^*_t)^{\alpha} \d t \right]  \leq m \varphi^*_s, \quad \forall s \geq 0.$$
\end{remark}

From now on we assume that the (sufficient) conditions \eqref{eq:integr} and \eqref{cond:finiteJ} hold. Then, thanks to Lemma \ref{lem:finitefunctional}, we fulfill Assumption \ref{ass:U} and our first-order conditions approach can be applied, yielding the following result. 

\begin{proposition}
\label{example-verification}
Let $\ell_0$ be the initial value of the process $(\ell_t)_{t\in[0,T]}$ of Lemma \ref{lem:existenceBEK}, suppose $\mathbf{y}\leq \ell_0 \mathbf{1}$ and consider the nondecreasing $\mathbb{F}$-adapted, $K_+$-valued, right-continuous process
\begin{equation}
\label{example-OC-group}
\widehat{\nu}^{\star}_t:=\mathbf{1}\sup_{0 \leq u \leq t}e^{-\lambda_0 u} \ell_u - \mathbf{y}, \qquad \widehat{\nu}^{\star}_{0^-}=0.
\end{equation}
Then, $\widehat{\nu}^{\star}$ is optimal for problem \eqref{OCP-group} if $\displaystyle \E\Big[\int_0^{\infty} \langle e^{t\A^*}\Phi^*_t, \d \widehat{\nu}^{\star}_t \rangle \Big] < \infty$.
\end{proposition}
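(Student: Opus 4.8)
The plan is to verify that the candidate $\widehat{\nu}^{\star}$ satisfies the necessary and sufficient first-order conditions (i)--(ii) of Corollary \ref{cor:FOC-group}. As a preliminary step I would confirm that $\widehat{\nu}^{\star} \in \widehat{\mathcal{C}}$: writing $M_t := \sup_{0\le u\le t} e^{-\lambda_0 u}\ell_u$, the process $t\mapsto \widehat{\nu}^{\star}_t = \mathbf{1} M_t - \mathbf{y}$ is adapted (since $\ell$ is optional), nondecreasing and right-continuous (the running supremum of the upper right-continuous process $t \mapsto e^{-\lambda_0 t}\ell_t$ is right-continuous), and it is $K_+$-valued precisely because $M_0 = \ell_0$ and the standing hypothesis $\mathbf{y}\le \ell_0\mathbf{1}$ give $\widehat{\nu}^{\star}_0 = \ell_0\mathbf{1} - \mathbf{y}\ge 0$; membership in $\widehat{\mathcal{C}}$ is then exactly the assumed integrability $\E[\int_0^\infty\langle e^{t\A^*}\Phi^*_t, \d\widehat{\nu}^{\star}_t\rangle]<\infty$.

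The computational heart is to evaluate the ``FOC density'' appearing in Corollary \ref{cor:FOC-group}. Under $\widehat{\nu}^{\star}$ the state is $e^{t\A}\widehat{Y}_t^{\mathbf{y},\widehat{\nu}^{\star}} = e^{\lambda_0 t} M_t\,\mathbf{1}$ by \eqref{example-1}, so \eqref{example-gradient} gives $\nabla\Pi(t, e^{t\A}\widehat{Y}_t^{\mathbf{y},\widehat{\nu}^{\star}}) = (z^*_t)^{\alpha}(e^{\lambda_0 t}M_t)^{-\alpha}\mathbf{1}$, a scalar multiple of the eigenvector $\mathbf{1}$. Applying $e^{(t-s)\A^*}$ and $e^{s\A^*}$ via \eqref{example-1}, integrating against $q(\d t)=e^{-rt}\d t$, and using $\Phi^*_s = e^{-rs}\varphi^*_s\mathbf{1}$, everything collapses onto $\mathbf{1}$ and the density reduces to $g_s\,\mathbf{1}$, where
\begin{equation*}
g_s := \E\Big[\int_s^\infty e^{-(r-\lambda_0^*)t}(z^*_t)^{\alpha}\big(e^{\lambda_0 t}M_t\big)^{-\alpha}\,\d t\,\Big|\,\mathcal{F}_s\Big] - e^{-(r-\lambda_0^*)s}\varphi^*_s .
\end{equation*}
Finiteness of the conditional expectation follows from \eqref{eq:integrabilityBEK} with $s=0$, since $M_t \ge \sup_{s\le u\le t}e^{-\lambda_0 u}\ell_u$. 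Because $\langle g_s\mathbf{1}, \d\widehat{\nu}_s\rangle = g_s\langle\mathbf{1}, \d\widehat{\nu}_s\rangle$ and $\langle\mathbf{1}, \d\widehat{\nu}_s\rangle$ is a nonnegative measure for any $\widehat{\nu}\in\widehat{\mathcal{C}}$, both FOCs become statements about the sign of the scalar process $g$.

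For condition (i) I would fix an arbitrary $\F$-stopping time $\tau$ and compare $M_t$ with $\sup_{\tau\le u\le t}e^{-\lambda_0 u}\ell_u$: since $M_t\ge \sup_{\tau\le u\le t}e^{-\lambda_0 u}\ell_u$ for $t\ge \tau$ and $x\mapsto x^{-\alpha}$ is decreasing, the backward equation \eqref{backwardeq-1dim} at $\tau$ yields $g_\tau\le 0$ almost surely. As this holds for every stopping time and $g$ admits an optional version, the optional section theorem gives $g\le 0$ up to an evanescent set, whence $\E[\int_0^\infty\langle g_s\mathbf{1}, \d\widehat{\nu}_s\rangle]\le 0$ for all $\widehat{\nu}\in\widehat{\mathcal{C}}$. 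For condition (ii) I would exploit the flat-off property of the running supremum: the measure $\d\widehat{\nu}^{\star}_s = \mathbf{1}\,\d M_s$ is carried by the contact set $\{s: e^{\lambda_0 s}M_s = \ell_s\}$. At any increase point $s$ one has $M_s = e^{-\lambda_0 s}\ell_s\le \sup_{s\le u\le t}e^{-\lambda_0 u}\ell_u$ for $t\ge s$, so that $M_t = \sup_{s\le u\le t}e^{-\lambda_0 u}\ell_u$ on $[s,\infty)$; substituting this identity into $g_s$ and invoking \eqref{backwardeq-1dim} at $\tau=s$ gives $g_s=0$ on the support of $\d\widehat{\nu}^{\star}$, hence $\E[\int_0^\infty\langle g_s\mathbf{1}, \d\widehat{\nu}^{\star}_s\rangle]=0$. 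Combining the two conditions with Corollary \ref{cor:FOC-group} yields optimality of $\widehat{\nu}^{\star}$.

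The main obstacle I anticipate is the rigorous, measurable handling of the running supremum: establishing that $\d M$ charges only the contact set $\{e^{\lambda_0 s}M_s=\ell_s\}$, that at such (random) times the supremum from $0$ coincides with the supremum from $s$, and transferring the pointwise relations $g_\tau\le 0$ and $g_\tau = 0$ along stopping times into statements valid $\d\widehat{\nu}\otimes\P$-almost everywhere via optional sections. These are exactly the delicate points where the upper right-continuity of $\ell$ from Lemma \ref{lem:existenceBEK} and the stopping-time flexibility of the Bank--El Karoui representation \eqref{backwardeq-1dim} are essential.
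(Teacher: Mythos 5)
Your proposal is correct and follows essentially the same route as the paper: reduce the FOC density of Corollary \ref{cor:FOC-group} to a scalar process via the eigenvector structure \eqref{example-1}--\eqref{example-gradient}, obtain condition (i) from the comparison $\sup_{0\le u\le t}e^{-\lambda_0 u}\ell_u\ge\sup_{s\le u\le t}e^{-\lambda_0 u}\ell_u$ together with the backward equation \eqref{backwardeq-1dim}, and obtain condition (ii) from the flat-off property of the running supremum (the paper treats the initial jump at $s=0$ and the support on $(0,\infty)$ separately, which your contact-set formulation subsumes). Your added care with optional sections to pass from statements at stopping times to statements $\d\widehat{\nu}\otimes\P$-a.e.\ is a reasonable tightening of a point the paper leaves implicit, but it does not change the argument.
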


\begin{proof}
First of all, notice that from \eqref{mild2} and \eqref{example-OC-group} we can write for any $t \geq 0$ 
\begin{equation}
\label{prod-cap-homo}
\widehat{Y}_t^{\mathbf{y},\widehat{\nu}^{\star}} = \mathbf{1} \sup_{0 \leq u \leq t} e^{-\lambda_0 u} \ell_u =:\widehat{Y}_t^{\star}.
\end{equation}

To prove the optimality of \eqref{example-OC-group} for problem \eqref{OCP-group} it suffices to verify that such an admissible control verifies the first-order conditions for optimality of Corollary \ref{cor:FOC-group}. 

By monotonicity of $\mathbf{k} \mapsto \nabla \Pi(t,\mathbf{k})$, we then have from \eqref{example-1}, \eqref{example-gradient}, and Lemma \ref{lem:existenceBEK}
\begin{align*}
& \E\left[\int_0^\infty\Big \langle e^{s\A^*} \E\left[\int_s^\infty e^{(t-s)\A^*} \nabla \Pi\left(t, e^{t\A}\widehat{Y}_t^{\star}\right) q(\mathrm{d}t)\,\Big|\,  \mathcal{F}_s\right]- e^{s\A^*}\Phi^*_s, \d \widehat{\nu}_s\Big\rangle  \right] \nonumber \\
& {=}  \E\left[\int_0^\infty  e^{-(r-\lambda_0^*)s} \Big \langle  \mathbf{1} \Big(\E\left[\int_s^\infty e^{-(r-\lambda_0^*) (t-s)} (z^*_t)^{\alpha} \Big(e^{\lambda_0 t}\sup_{0 \leq u \leq t} e^{-\lambda _0 u}\ell_u\Big)^{-\alpha}\mathrm{d}t\,\Big|\,  \mathcal{F}_s\right]- \varphi^*_s\Big), \d \widehat{\nu}_s \Big\rangle  \right] \\
& \leq  \E\left[\int_0^\infty  e^{-(r-\lambda_0^*)s} \Big \langle  \mathbf{1} \Big(\E\left[\int_s^\infty e^{-(r-\lambda_0^*) (t-s)} (z^*_t)^{\alpha} \Big(e^{\lambda_0 t}\sup_{s \leq u \leq t} e^{-\lambda _0 u}\ell_u\Big)^{-\alpha}\mathrm{d}t\,\Big|\,  \mathcal{F}_s\right]- \varphi^*_s\Big), \d \widehat{\nu}_s \Big\rangle  \right] =0. \nonumber
\end{align*}
Hence, the inequality in claim (i) of Corollary \ref{cor:FOC-group} is satisfied by $\widehat{\nu}^{\star}$.

In order to prove that the equality in claim (ii) of Corollary \ref{cor:FOC-group} holds, notice that $\widehat{\nu}^{\star}_0(x)=\mathbf{1}(x)\ell_0 - \mathbf{y}(x)$ and $\widehat{Y}_0^{\star}=\mathbf{1}(x)\ell_0$, $x \in D$. Moreover, we have that the times of increase of $\widehat{\nu}^\star_{\cdot}(x)$ on $(0,\infty)$ (i.e.\ any strictly positive time in the support of the measure induced on $\mathbb{R}_+$ by the nondecreasing process $t \mapsto \widehat{\nu}^{\star}_t(\omega,x)$, $(\omega,x)\in \Omega \times D$) are independent of $x \in D$ since, by \eqref{example-OC-group}, they coincide with the time of increase of the nondecreasing process $\zeta^{\star}_{\cdot}:=\sup_{0 \leq u \leq \cdot} e^{-\lambda_0 u} \ell_u$ which is independent of $x$. 
At any of such times $s>0$ we have
\begin{equation}
\label{eq:binding}
\big[e^{t\A}\widehat{Y}_t^{\star}\big](x) = \left[e^{t\A}\mathbf{1} \sup_{0 \leq u \leq t} e^{-\lambda_0 u} \ell_u\right](x) = \left[e^{\lambda_0 t}\sup_{s \leq u \leq t} e^{- \lambda_0 u} \ell_u\right], \quad \forall x \in D.
\end{equation}

Therefore, thanks to the previous considerations, we have $\d\widehat{\nu}^{\star}_t(x) = \mathbf{1}(x) \d\zeta^{\star}_t$ for all $x \in D$ and $t > 0$, and, together with Lemma \ref{lem:existenceBEK}, this allows to write
\begin{align}
\label{example-verifying2}
& \E\left[\int_0^\infty\Big \langle e^{s\A^*} \E\left[\int_s^\infty e^{(t-s)\A^*} \nabla \Pi\left(t,Y_t^{\star}\right) q(\mathrm{d}t)\,\Big|\,  \mathcal{F}_s\right]- e^{s\A^*}\Phi^*_s, \d \widehat{\nu}^{\star}_s\Big\rangle  \right] \nonumber \\
& = \E\left[\int_0^\infty\Big \langle \E\left[\int_0^\infty e^{t\A^*} \nabla \Pi\left(t,Y_t^{\star}\right) q(\mathrm{d}t)\right]- \Phi^*_0, \d \widehat{\nu}^{\star}_0\Big\rangle  \right] \nonumber \\
& + \E\left[\int_0^\infty\Big \langle e^{s\A^*} \E\left[\int_s^\infty e^{(t-s)\A^*} \nabla \Pi\left(t,Y_t^{\star}\right) q(\mathrm{d}t)\,\Big|\,  \mathcal{F}_s\right]- e^{s\A^*}\Phi^*_s, \d \widehat{\nu}^{\star}_s\Big\rangle  \right] \nonumber \\
& = \E\left[ \int_{D} \Big(\E\left[\int_0^\infty e^{-(r-\lambda_0^*) t} (z^*_t)^{\alpha} \left(e^{\lambda_0 t}\sup_{0 \leq u \leq t} e^{-\lambda _0 u}\ell_u\right)^{-\alpha}\mathrm{d}t\right]- \varphi^*_0\Big) \big(\ell_0 - \mathbf{y}(x)\big) \mu(dx) \right] \nonumber \\
& + \mu(D) \E\left[\int_{0^+}^\infty e^{-(r-\lambda_0^*)s} \left(\E\left[\int_s^\infty e^{-(r-\lambda_0^*) (t-s)} (z^*_t)^{\alpha} \Big(e^{\lambda_0 t}\sup_{s \leq u \leq t} e^{-\lambda _0 u}\ell_u\Big)^{-\alpha}\mathrm{d}t\,\Big|\,  \mathcal{F}_s\right]- \varphi^*_s\right) \d \zeta^{\star}_s \right] =0, \nonumber 
\end{align}
thus completing the proof of the optimality of $\widehat{\nu}^{\star}$.
\end{proof}

\begin{remark}
\label{rem:jumps-integr}
\begin{itemize}
\item[(a)] Notice that one can identify two different kinds of jumps in the optimal control $\widehat{\nu}^{\star}$. At initial time, a lump sum investment of size $\mathbf{1}(x)\ell_0 - \mathbf{y}(x)$ at position $x \in D$ allows to instantaneously move to the initial uniform desired level $\ell_0$, and thus resolve an initial situation of under-production. Notice that at each position $x\in D$ a different size of investment should be optimally made. Subsequent jumps of $\widehat{\nu}^{\star}$ are instead due only to the possible jumps of the stochastic time-dependent target $\ell$. Those jumps are typically related to those of the processes $z^*$ and $\varphi^*$, and therefore we can think of them as lump sum adjustments in the production capacity due to shocks in the market, e.g.\ shocks in market's demand of the produced good or in the marginal price of investment. Those jumps do not affect the space distribution of the production capacity, which in fact remains uniform.     

\item[(b)] The integrability condition on $\widehat{\nu}^{\star}$ required in Proposition \ref{example-verification} has to be verified on a case by case basis when explicit solutions to \eqref{backwardeq-1dim} are available (see, for example, Theorem 7.2 in \cite{RiedelSu} or Section 4 of \cite{FerrariSalminen} for L\'evy settings). Generally speaking, if one picks $r$ sufficiently large then it can be shown that 
\begin{align*}
& \E\bigg[\int_0^{\infty} \langle e^{t\A^*}\Phi^*_t, \d \widehat{\nu}^{\star}_t \rangle\bigg] = \E\bigg[\int_0^{\infty} e^{-(r-\lambda_0^*) t} \varphi^*_t \langle \mathbf{1} , \d \widehat{\nu}^{\star}_t \rangle\bigg]  \nonumber \\
& = \int_D \big(\mathbf{1}(x) \ell_0 - \mathbf{y}(x)\big) \mu(\d x) + \mu(D) \E\bigg[\int_{0^+}^{\infty} e^{-(r-\lambda_0^*) t} \varphi^*_t \d \big(\sup_{0\leq u \leq t} e^{-\lambda_0 u} \ell_u \big) \bigg] < \infty. \nonumber
\end{align*}
\end{itemize}
\end{remark}

Propositions \ref{example-verification} and \ref{lemm:optimal} then yield the next result.

\begin{corollary}
\label{cor:optimalexample}
Let $\mathbf{y} \leq \mathbf{1}\ell_0$ and lett $\widehat{\nu}^{\star}$ of \eqref{example-OC-group} ibe such that $\E[\int_0^{\infty} \langle e^{t\A^*}\Phi^*_t, \d \widehat{\nu}^{\star}_t \rangle] < \infty$. Then, 
$$\nu^{\star}_{t} := \int_0^{t} e^{s\A} \d \widehat{\nu}^{\star}_s, \quad \nu^{\star}_{0^-}=0,$$
is optimal for problem \eqref{OCproblem} and
$$Y_t^{\star}:= \mathbf{1} e^{\lambda_0 t} \sup_{0 \leq u \leq t} e^{-\lambda_0 u} \ell_u, \quad Y^{\star}_{0^-}=\mathbf{y}$$ 
is the optimally controlled production capacity.
\end{corollary}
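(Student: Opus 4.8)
The plan is to obtain the statement by chaining the two immediately preceding propositions and then reading off the optimal state by an explicit one-line computation; essentially no new analytic work is required, since all the substance has been absorbed into those propositions. First I would invoke Proposition \ref{example-verification}: under the standing hypotheses $\mathbf{y} \leq \ell_0\mathbf{1}$ and $\E[\int_0^{\infty} \langle e^{t\A^*}\Phi^*_t, \d \widehat{\nu}^{\star}_t \rangle] < \infty$, that result guarantees that the control $\widehat{\nu}^{\star}$ defined in \eqref{example-OC-group} is optimal for the transformed problem \eqref{OCP-group}. This is the genuine input, and it has already been established there by verifying the first-order conditions of Corollary \ref{cor:FOC-group}.

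Next I would apply Proposition \ref{lemm:optimal}, which furnishes the passage from the transformed problem back to the original one. Since the map $\nu \mapsto \widehat{\nu}$ is a bijection between $\mathcal{C}$ and $\widehat{\mathcal{C}}$ (as noted just before Corollary \ref{cor:FOC-group}), its inverse sends the optimal $\widehat{\nu}^{\star}$ to the control $\nu^{\star}_t := \int_0^t e^{s\A}\, \d \widehat{\nu}^{\star}_s$, which therefore lies in $\mathcal{C}$ and is optimal for \eqref{OCproblem}. The integrability hypothesis placed on $\widehat{\nu}^{\star}$ is precisely what ensures $\widehat{\nu}^{\star} \in \widehat{\mathcal{C}}$, and hence, through the bijection, $\nu^{\star} \in \mathcal{C}$; this admissibility bookkeeping is the only point that requires a moment of care.

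Finally, I would identify the optimally controlled state. Proposition \ref{lemm:optimal} asserts that the state process associated with $\nu^{\star}$ is $\big(e^{t\A}\widehat{Y}^{\mathbf{y},\widehat{\nu}^{\star}}_t\big)_{t\geq 0}$. Using the representation \eqref{prod-cap-homo}, namely $\widehat{Y}^{\mathbf{y},\widehat{\nu}^{\star}}_t = \mathbf{1}\sup_{0 \leq u \leq t} e^{-\lambda_0 u} \ell_u$, together with the eigenvector relation $e^{t\A}\mathbf{1} = e^{\lambda_0 t}\mathbf{1}$ from \eqref{example-1}, and noting that the scalar $\sup_{0 \leq u \leq t} e^{-\lambda_0 u} \ell_u$ pulls through the linear operator $e^{t\A}$, a direct computation gives
$$
e^{t\A}\widehat{Y}^{\mathbf{y},\widehat{\nu}^{\star}}_t = \Big(\sup_{0 \leq u \leq t} e^{-\lambda_0 u} \ell_u\Big)\, e^{t\A}\mathbf{1} = \mathbf{1}\, e^{\lambda_0 t}\sup_{0 \leq u \leq t} e^{-\lambda_0 u} \ell_u = Y_t^{\star},
$$
which is exactly the claimed optimally controlled production capacity, with $Y^{\star}_{0^-}=\mathbf{y}$ following from the convention $\widehat{Y}_{0^-}^{\mathbf{y},\widehat{\nu}^{\star}}=\mathbf{y}$.

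Because the difficulty is entirely contained in Propositions \ref{example-verification} and \ref{lemm:optimal}, I do not expect any real obstacle at this stage. The most delicate (though routine) step is confirming that the integrability assumption on $\widehat{\nu}^{\star}$ transfers correctly to $\nu^{\star}$ under the correspondence $\mathcal{C}\leftrightarrow\widehat{\mathcal{C}}$, so that $\nu^{\star}$ is a legitimate admissible control and the optimality inherited from \eqref{OCP-group} genuinely applies to \eqref{OCproblem}.
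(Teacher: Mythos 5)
Your proposal is correct and follows exactly the route the paper takes: the text derives this corollary in one line by combining Proposition \ref{example-verification} (optimality of $\widehat{\nu}^{\star}$ for \eqref{OCP-group}) with Proposition \ref{lemm:optimal} (transfer back to \eqref{OCproblem}), and your explicit computation of $e^{t\A}\widehat{Y}^{\mathbf{y},\widehat{\nu}^{\star}}_t$ via \eqref{prod-cap-homo} and \eqref{example-1} just makes the identification of the optimal state, which the paper leaves implicit, fully explicit.
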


%%%%%%%%%%%%%%%%%%%%%%%%%%%%%%%%%%%%%%%%%%%%%%%%%%%%%%%%%%%%%%%%%%%%%

\section{Application to PDE models}
\label{Sec:PDE}

In this section we consider PDE frameworks in which the requirements in Assumption \ref{ass:A} on the abstract operator $\A$ are fulfilled. In particular, we discuss the case in which $\mathcal{A}$ is an elliptic self-adjoint operator, and we illustrate two possible cases having potential applications.

\subsection{Dirichlet boundary conditions in the $n$-dimensional space}
\label{sec:Dirichlet}

Let $D\subseteq \R^n$ be an open domain. Consider $a_{ij}: D \to \mathbb{R}$, $1 \leq i,j\leq d$, $f:D \to \mathbb{R}$ bounded and Borel-measurable such that for some $\lambda>0$
\begin{equation}
\label{eq:unellip}
\sum_{i,j=1}^n a_{ij}(x) \xi_i\xi_j \geq \lambda |\xi|^2, \quad \forall x \in D,\,\,\forall \xi \in \mathbb{R}^n.
\end{equation}
Set $a(x):=(a_{ij}(x))_{1 \leq i,j \leq n}$ for $x \in D$, and consider the symmetric bilinear form
$$\mathcal{E}(\varphi, \psi):= \frac{1}{2}\int_D \Big(\langle a(x)\nabla\varphi(x), \nabla\psi(x)\rangle + f(x)\varphi(x)\psi(x)\Big) \d x, \quad \varphi,\psi \in C_0^1(D),$$
where $\langle \cdot,\cdot \rangle$ denotes the inner product in $\mathbb{R}^n$ and $C_0^1(D)$ the set of all differentiable functions from $D$ into $\mathbb{R}$ with compact support. Let ${\mathcal{D}}(\mathcal{E})$ be the abstract completion of $C_0^1(D)$ {in $L^2(D)$} with respect to the norm $|f|:=\mathcal{E}(f,f)^{\frac{1}{2}}$. It turns out that {$\mathcal{D}(\mathcal{E})= W^{1,2}_0(D) \subset L^2(D)$}, the latter being the classical Sobolev space of order $1$ in $L^2(D)$ with Dirichlet boundary conditions, {and $\mathcal{E}$ can be extended by continuity as a bilinear form on it}.
Then there exists a unique self-adjoint operator $\mathcal{A}: \mathcal{D}(\A) \subset L^2(D) \to L^2(D)$ such that
$$\mathcal{E}(\varphi, \psi) = - \int_D (\mathcal{A}\varphi)(x)\, \psi(x)\, \d x, \quad \forall \varphi\in\mathcal{D}(A), \  \psi \in W^{1,2}_0(D),$$
where 
$$\mathcal{D}(\A):=\big\{\varphi \in W^{1,2}_0(D):\, W^{1,2}_0(D) \ni \psi \mapsto \mathcal{E}(\varphi, \psi)\,\,\text{is continuous w.r.t.\ $L^2(D)$-norm}\big\}.$$
Heuristically,
$$(\A\varphi)(x) = \frac{1}{2}\diver\big(a(x)\nabla\varphi(x)\big) + f(x)\varphi(x), \quad x \in D.$$
Furthermore, $(\A,\mathcal{D}(\A))$ generates a  positivity-preserving $C_0$-semigroup of linear operators $(e^{t\mathcal{A}})_{t\geq 0}\subseteq \mathcal{L}^+(L^2(D))$. For details, we refer to Section 2 in Chapter II of \cite{MaRockner92}, where an even more general situation is analyzed.

More generally, the above approach can be generalized to (not necessarily symmetric) bilinear forms of type
\begin{eqnarray*}
\label{eq:bilinear}
\displaystyle \mathcal{E}(\varphi, \psi) & := & \frac{1}{2}\int_D \Big[\langle a(x)\nabla\varphi(x), \nabla\psi(x)\rangle + \langle b(x), \nabla\varphi(x)\rangle \psi(x)  \nonumber \\
&& \displaystyle + \varphi(x) \langle h(x), \nabla\psi(x)\rangle + f(x)\varphi(x)\psi(x)\Big] \d x, \quad \varphi,\psi \in C_0^{1}(D), \nonumber 
\end{eqnarray*}
giving sense to the heuristic differential operator
\begin{eqnarray*}
& \displaystyle{ (\A\varphi)(x) = \frac{1}{2}\diver\Big(a(x)\nabla\varphi(x) + h(x)\varphi(x)\Big) - \langle b(x), \nabla\varphi(x)\rangle + f(x)\varphi(x), \quad x \in D,}
\end{eqnarray*}
in such a way that $(\A,\mathcal{D}(\A))$ (with $\mathcal{D}(\A)$ constructed similarly as above) generates a positivity preserving $C_0$-semigroup of linear operators $(e^{t\A})_{t\geq0} \subseteq \mathcal{L}^+(L^2(D))$. The following assumptions on the coefficients are sufficient for this:
$a_{ij} \in L^1_{\text{loc}}(D)$, $1 \leq i,j\leq d$, and \eqref{eq:unellip} holds; $f \in L^1_{\text{loc}}(D)$ and lower bounded; if $b=(b_1,\dots,b_n)$, $h=(h_1,\dots,h_n)$, each of the components $b_i$ or $h_i$ should belong to $L^1_{\text{loc}}(D)$ and decomposable as a sum of two functions $c_1 + c_2$, where $c_1 \in L^{\infty}(D)$, $c_2 \in L^{p}(D)$ for some $p\geq n$, with $n\geq 3$. For details see Theorem 2.2 in \cite{MaRo95}.

Then \eqref{eq:abstract}, with the specifications $\mathbf{y}:=y_0(\cdot)\in L^2(D)$ and $\nu_t:=c(t,\cdot)$, corresponds to the singularly controlled (random) PDE 
\begin{equation*}
%\label{eq:Dir}
\begin{cases}
\displaystyle{\frac{\partial y}{\partial t} (t,x)=\frac{1}{2} \diver\Big(a(x)\nabla y(t,x) + h(x)y(t,x)\Big) - \langle b(x), \nabla y(t,x)\rangle + f(x)y(t,x)+ \d c(t,x),\,\, t\in[0,T],}\\\\
y(0^-,x)=y_0(x), \ \ \ \  x\in D,\\\\
y(t,x)=0, \ \ \ \forall  (t,x)\in[0,T]\times \partial D.
\end{cases}
\end{equation*}

In the special case $a_{ij} = \delta_{ij}$, $1 \leq i,j\leq n$, and $f=b=h=0$, $\mathcal{A}=\Delta$, with $\mathcal{D}(\A)=W^{1,2}_0(D)\cap W^{2,2}(D)$ (where $W^{2,2}(D)$ is the Sobolev space of order $2$ in $L^2(D)$). In this case the semigroup $(e^{t\mathcal{A}})_{t\geq 0}$ is just the transition semigroup of Brownian motion $B$ on $D$ with absorbing vanishing boundary condition on $\partial D$; i.e.
$$(e^{t\mathcal{A}} \varphi)(x)=\E\left[\varphi(B_{t})\mathds{1}_{\{\tau>t\}}\right], \ \ \ x\in D,\, \varphi\in L^2(D),$$
with $\tau$ being the lifetime of $B$. 

If $D= \mathbb{R}^n$, it is just the Brownian semigroup, i.e. 
$$(e^{t\mathcal{A}} \varphi)(x)=\E\left[\varphi(B_{t})\right] = \frac{1}{(2\pi t)^{\frac{n}{2}}} \int_{\mathbb{R}^n} \varphi(y) e^{-\frac{1}{2t}|x-y|^2_{\mathbb{R}^n}} \d y, \quad  x \in \mathbb{R}^n,\,\varphi\in L^2(\R^n).$$

\subsection{Compact 1-dimensional manifold without boundary}
\label{sec:compactmani}

Let $S^1\cong \mathbb{R}/\mathbb{Z}$ and identify the functional spaces on $S^1$ with the corresponding functional spaces of $1$-periodic functions on $\R$; the derivatives of $\varphi:S^1\to\R$ are intended as the derivatives of this periodic function. Similarly to what we have done in Subsection \ref{sec:Dirichlet}, we can embed into our abstract setting the following singularly controlled (random) parabolic PDE on $S^1$:
\begin{equation}
\label{eq:S1}
\begin{cases}
\displaystyle{\frac{\partial y }{\partial t} (t,x)=\frac{1}{2}\frac{\partial }{\partial x}\left(a(x)\frac{\partial y}{\partial x}(t,x) 
\right)+f(x)y(t,x) + \d c(t,x), \ \ \ t\in[0,T],}\\\\
y(0^-,x)=y_0(x), \ \ \ \  x\in S^1.
%y(t,0)=y(t,2\pi), \ \ \ \frac{\partial}{\partial x} y(t,0)= \frac{\partial}{\partial x} y(t,2\pi), \ \ \ \ t\in[0,T].
\end{cases}
\end{equation}
In particular, this can be accomplished by taking $X:=L^2(S^1)$ and considering $(\mathcal{A},\mathcal{D}(A))$ defined by
$$\mathcal{D}(\mathcal{A})=  W^{2,2}(S^1); \ \ \ \ 
(\mathcal{A} \varphi)(x)=\frac{1}{2}\frac{\d }{\d x}\left(a(x)\frac{\d}{\d x} \varphi(x)\right)+f(x)\varphi(x), \ \ \ x\in S^1, \ \ \ \varphi\in \mathcal{D}(\mathcal{A}).$$   
Such a kind of settings have been considered in recent works on economic growth with geographical dimension in a deterministic and non-singular framework (see \cite{BCF} and \cite{BFFG}).

%%%%%%%%%%%%%%%%%%%%%%%%%%%%%%%%%%%%%%%%%%%%%%%%%%%%

\section{Concluding Remarks}
\label{sec:conclusion}

In this paper we have studied a class of infinite-dimensional singular stochastic control problems in which the controlled dynamics evolves according to an abstract evolution equation. We have completely characterized optimal controls through necessary and sufficient first-order conditions and we have determined the explicit form of the optimal control in a case study. There are several directions towards which our study can be extended and further developed, and we briefly discuss three relevant ones in the following.
\vspace{0.15cm}

\emph{Singular Control of SPDEs.}\ The singularly controlled abstract evolution equation \eqref{eq:abstract} does not contain any noisy term. We believe that our approach might be successfully employed also in the case in which the controlled state process $Y$ evolves according to the SPDE (see, e.g., \cite{DPZ} and \cite{LR})
$$\d Y_t = \mathcal{A} Y_t\, \d t + \mu(t,Y_t)\d t+\sigma(t,Y_t) \d W_t + \d \nu_t, \quad t\geq 0, \qquad Y_{0^-}=\mathbf{y} \in K_+,$$
where $W$ is a cylindrical Brownian motion, and $\mu,\sigma$ are suitable drift and diffusion coefficients. If $\mu,\sigma$ are linear maps, under the assumption of a concave payoff functional to be maximized (or convex cost functional to be minimized), the linearity of the controlled state dynamics with respect to the control process should enable the derivation of necessary and sufficient first-order conditions for optimality as those developed above in this paper. Notice that already the Ornstein-Uhlenbeck case of vanishing $\mu$ and constant $\sigma$ represents an interesting problem that we leave for future research.
\vspace{0.15cm}

\emph{Infinite-dimensional Bank-El Karoui's Representation Theorem.}\ In finite-dimensional settings, the Bank-El Karoui's representation theorem \cite{BankElKaroui} is known to be a powerful tool to tackle problems of (monotone) singular stochastic control and optimal stopping that do not necessarily enjoy a Markovian structure (\cite{BankRiedel1}, \cite{BankFollmer}, \cite{Bank05}, and \cite{CFR}, among others). In the case study of Section \ref{sec:explicit}, it has been possible to reduce the dimensionality of our problem and then to suitably employ the Bank-El Karoui's theorem so to find an explicit solution. A natural question that deserves to be investigated is whether an infinite-dimensional version of Bank-El Karoui's representation theorem can be proved. Clearly, this would require a careful separate analysis that is outside the scopes of the present work.
\vspace{0.15cm}

\emph{Series Expansion Analysis for Diagonal Operators.}\ When the operator $\mathcal{A}$ admits a spectral decomposition -- which is the case, e.g., of some of the second order differential operators considered in Section \ref{Sec:PDE} -- it would be interesting to try to exploit such a decomposition in order to reduce the abstract complexity of our first order conditions, at least for profit functions $\Pi$ with specific separable structures. This study is also left for future research.

%%%%%%%%%%%%%%%%%%%%%%%%%%%%%%%%%%%%%%%%%%%%%%%%%%%%%%%%%%%%%%%%%%%%%%%%%%%%%%%%%%%%%%%%%%%%%%%%%%%%%%%%%%%%%%%%%%%%%%%%%%%%%%%%%%%%%%%%%%%%%%%

\medskip

\indent \textbf{Acknowledgments.} Financial support by the German Research Foundation (DFG) through the Collaborative Research Centre 1283 ``Taming uncertainty and profiting from randomness and low regularity in analysis, stochastics and their applications'' is gratefully acknowledged by the authors. 

%%%%%%%%%%%%%%%%%%%%%%%%%%%%%%%%%%%%%%%%%%%%%%%%%%%%%%%%%%%%%%%%%%%%%%%%%%%%%%%%%%%%%%%%%%%%%%


\begin{thebibliography}{199}

\bibitem{Oksendal2} \textsc{Agram, N., Hilbert, A., \O ksendal, B.}\ (2019). Singular Control of SPDEs with Space-Mean Dynamics. Preprint on \textbf{arXiv}: 1902.06539.  

\bibitem{BankRiedel1}\textsc{Bank, P., Riedel, F.}\ $(2001)$. Optimal Consumption Choice with Intertemporal Substitution. \emph{Ann.\ Appl.\ Probab.}\ \textbf{11} 750--788.

\bibitem{BankFollmer} \textsc{Bank, P., F\"ollmer, H.}\ (2002). American Options, Multi-Armed Bandits, and Optimal Consumption Plans: a Unifying View. In ``Paris-Princeton Lectures on Mathematical Finance'', Vol.\ $1814$ of Lecture Notes in Math.\ pp.\ 1--42. Springer-Verlag, Berlin.

\bibitem{BankElKaroui}\textsc{Bank, P., El Karoui, N.}\ $(2004)$. A Stochastic Representation Theorem with Applications to Optimization and Obstacle Problems. \emph{Ann.~Probab.}\ \textbf{32} 1030--1067.

\bibitem{Bank05} \textsc{Bank, P.}\ (2005). Optimal Control under a Dynamic Fuel Constraint. \emph{SIAM J.\ Control Optim.}\ \textbf{44(4)} 1529-1541.

\bibitem{BM} \textsc{Barbu, V., Marinelli, C.}\ $(2008)$. Variational Inequalities in Hilbert Spaces with Measures and Optimal Stopping Problems. \emph{Appl.\ Math.\ Optim.}\ \textbf{58(2)} 237--262.

%\bibitem{Balder}\textsc{Balder, E.J.}\ $(1989)$. Infinite-Dimensional Extension of a Theorem of Koml\'os. \emph{Probab.\ Th.\ Rel.\ Fields}\ \textbf{81} 185--188.

\bibitem{BDDM} \textsc{Bensoussan, A., Da Prato G., Delfour M.C., Mitter S.K.}\ $(2006)$. \emph{Representation and Control of Infinite Dimensional Systems} (Second Edition). Birkh\"auser.

\bibitem{BCF} \textsc{Boucekkine, R., Camacho C., Fabbri, G.}\ $(2013)$. Spatial Dynamics and Convergence: The Spatial AK model. \emph{J.\ Econ.\ Theory} \textbf{148} 2719--2736.

\bibitem{BFFG} \textsc{Boucekkine, R., Fabbri, G., Federico, S., Gozzi F.}\ $(2018)$. Growth and Agglomeration in the Heterogeneous Space: A Generalized AK Approach. To appear on \emph{J.\ Econ.\ Geography}.

\bibitem{Ch} \textsc{Chamberlin, E.H.}\ $(1933)$. \emph{The Theory of Monopolistic Competition}. Harvard University Press. 

\bibitem{CFR} \textsc{Chiarolla, M.B., Ferrari, G., Riedel, G.}\ $(2013)$. Generalized Kuhn-Tucker Conditions for N-Firm Stochastic Irreversible Investment under Limited Resources. \emph{SIAM J.\ Control Optim.}\ \textbf{51(5)} 3863--3885.

\bibitem{CDA} \textsc{Chiarolla, M.B., De Angelis, T.}\ $ (2015)$. Analytical Pricing of American Put Options on a Zero Coupon Bond in the Heath-Jarrow-Morton Model. \emph{Stoch.\ Proc.\ Appl.}\ \textbf{125} 678--707.

\bibitem{CFS} \textsc{Chiarolla, M.B., Ferrari, G., Stabile, G.}\ $(2015)$. Optimal Dynamic Procurement Policies for a Storable Commodity with L\'evy Prices and Convex Holding Costs. \emph{Eur.\ J.\ Oper.\ Res.}\ \textbf{247(3)} 847--858.
 
\bibitem{Cochrane} \textsc{Cochrane, J.H.}\ $(2009)$. \emph{Asset Pricing: Revised Edition}. Princeton University Press.
 
%\bibitem{CL} \textsc{Coddington E.\ A., Levinson N.}\ $(1955)$. \emph{Theory of Ordinary Differential Equations}. McGraw-Hill.

\bibitem{DPZ} \textsc{Da Prato, G., Zacbzyk, J.}\ $(2014)$. \emph{Stochastic Equations in Infinite Dimensions} (Second Edition). Encyclopedia of Mathematics and its Applications \textbf{152}. Cambridge University Press.

\bibitem{DM}\textsc{Dellacherie, C., Meyer, P.}\ $(1978)$. \emph{Probabilities and Potential B}. North-Holland Mathematics Studies \textbf{29}.

\bibitem{DU} \textsc{Diestel, J., Uhl, J.J.\ Jr.}\ $(1977)$. \emph{Vector Measures}. Mathematical Surveys-Number 15. American Mathematical Society.

\bibitem{DP} \textsc{Dixit, A., Pindyck, R.}\ $(1994)$. \emph{Investment under Uncertainty}.   Princeton University Press.

\bibitem{EN} \textsc{Engel K.J., Nagel R.}\ $(1995)$. \emph{One-parameter Semigroups for Linear Evolution Equations}. Springer, Graduate Texts in Mathematics \textbf{194}.

%\bibitem{Evans} \textsc{Evans L.\,C.}\ $(2010)$. \emph{Partial Differential Equations} (Second Edition). Graduate Studies in Mathematics. American Mathematical Society.

\bibitem{FGS} \textsc{Fabbri, G., Gozzi, F., Swiech, A.}\ $(2017)$. \emph{Stochastic Optimal Control in Infinite Dimension: Dynamic Programming and HJB Equations}. Probability Theory and Stochastic Modelling \textbf{82}. Springer.

\bibitem{FO} \textsc{Federico, S., \O ksendal, B.}\ $(2011)$. Optimal Stopping of Stochastic Differential Equations with Delay Driven by a L\'evy Noise. \emph{Potential Anal.}\ \textbf{34(2)} 181--198.

\bibitem{Ferrari15} \textsc{Ferrari, G.}\ (2015). {On an Integral Equation for the Free-Boundary of Stochastic, Irreversible Investment Problems}. \emph{Ann.\ Appl.\ Probab.} \textbf{25(1)} 150--176.

\bibitem{FerrariSalminen} \textsc{Ferrari, G., Salminen, P.}\ (2016). Irreversible Investment under L\'evy Uncertainty: an Equation for the Optimal Boundary. \emph{Adv.\ Appl.\ Probab.}\ \textbf{48(1)} 298--314. 

\bibitem{FlemingSoner}\textsc{Fleming, W.H., Soner, H.M.}\ (2005). \emph{Controlled Markov Processes and Viscosity Solutions}. 2nd Edition. Springer. 

\bibitem{GS} \textsc{Gatarek, D., Swiech, A.}\ $(1999)$. Optimal Stopping in Hilbert Spaces and Pricing of American Options. \emph{Math.\ Meth.\ Oper.\ Res.}\ \textbf{50} 135--147.

\bibitem{HHK} \textsc{Hindy, A., Huang, C., Kreps, D.}\ $(1992)$. On Intertemporal Preferences in Continuous Time: the Case of Certainty. \emph{J.\ Math.\ Econ.}\ \textbf{21(5)} 401--440.

%\bibitem{K} \textsc{Krasnosel'ski M.A.}\ $(1964)$. \emph{Positive Solutions of Operator Equations}. P.\ Noordhoff LTD, Groningen, The Netherlands.

\bibitem{KO} \textsc{Krugman, P., Obstfeld, M.}\ $(2008)$. \emph{International Economics: Theory and Policy}. Addison-Wesley.
 
\bibitem{LR} \textsc{Liu, W., R\"ockner, M.}\ $(2015)$. \emph{Stochastic Partial Differential Equations: An Introduction}. Universitext, Springer.

%\bibitem{Lunardi} \textsc{Lunardi A.}\ $(1995)$. \emph{Analytic Semigroups and Optimal Regularity in Parabolic Problems}. Birkh\"{a}user.

\bibitem{MaRockner92} \textsc{Ma, Z.-M., R\"ockner, M.}\ $(1992)$. \emph{Introduction to the Theory of (Non-Symmetric) Dirichlet Forms}. Universitext, Springer-Verlag.

\bibitem{MaRo95} \textsc{Ma, Z.-M., R\"ockner, M.}\ $(1995)$. Markov Processes Associated with Positivity Preserving Coercive Forms. \emph{Can.\ J.\ Math.}\ \textbf{47(4)} 817--840. 

\bibitem{Oksendal1} \textsc{\O ksendal, B., Sulem, A., Zhang, T.}\ (2014). Singular Control and Optimal Stopping of SPDEs and Backward SPDEs with Reflection. \emph{Math.\ Oper.\ Res.}\ \textbf{39(2)} 464--486. 

\bibitem{O} \textsc{Ouhabaz E.M.}\ $(2005)$. \emph{Analysis of Heat Equation on Domains}. London Mathematical Society Monographs \textbf{31}. Princeton University Press.
 
\bibitem{RiedelSu} \textsc{Riedel, F., Su, X.}\ (2011). {On Irreversible Investment}. \emph{Finance Stoch.}\ \textbf{15(4)} 607--633.

\end{thebibliography}
\end{document}